\definecolor{greenmain}{HTML}{00AF64}
\definecolor{bluemain}{HTML}{0B61A4}
\definecolor{orangemain}{HTML}{FF9200}
\definecolor{redmain}{HTML}{FF4900}
\newtheorem{theorem}{Theorem}
\newaliascnt{lemma}{theorem}
\newtheorem{lemma}[lemma]{Lemma}
\newtheorem{proposition}[theorem]{Proposition}
\newaliascnt{cor}{theorem}
\newaliascnt{def}{theorem}
\newtheorem{definition}[def]{Definition}
\def\indep{\perp\!\!\!\perp}
\newcommand{\F}{\mathcal{F}}
\newcommand{\E}{\mathbb{E}}
\renewcommand{\P}{\mathbb{P}}
\renewcommand{\tilde}{\widetilde}
\newcommand{\R}{\mathbb{R}}
\newcommand{\V}{\mathbb{V}}
\newcommand{\TV}[1]{\lVert #1 \rVert_{TV}}
\newcommand{\Pij}[2]{\P_{#1:#2}}
\begin{document}

\begin{frontmatter}

% "Title of the Paper"
\title{Estimating beta-mixing coefficients via histograms}

\runtitle{Estimating beta-mixing coefficients}

\begin{aug}
% indicate corresponding author with \corref{}
 \author{\fnms{Daniel J.} \snm{McDonald}\corref{}\thanksref{c} \ead[label=e1]{dajmcdon@indiana.edu}}
 \author{\fnms{Cosma Rohilla} \snm{Shalizi}\thanksref{a,b,e2}\ead[label=e2,mark]{cshalizi@cmu.edu}}
 \and
 \author{\fnms{Mark}
   \snm{Schervish}\thanksref{a,e3}\ead[label=e3,mark]{mark@cmu.edu}}
 \address[c]{Department of Statistics\\ Indiana University\\
   Bloomington, IN 47401\\\printead{e1}}
 \address[a]{Department of Statistics\\ Carnegie Mellon University\\
   Pittsburgh, PA 15213\\\printead{e2,e3}}
 \address[b]{Santa Fe Institute\\ 1399 Hyde Park Road\\
   Santa Fe, New Mexico 87501}

\runauthor{McDonald, Shalizi, and Schervish}

\affiliation{Department of Statistics, Carnegie Mellon University}

\end{aug}

\begin{abstract}
  The literature on statistical learning for time series often assumes
  asymptotic independence or ``mixing'' of the data-generating process. These
  mixing assumptions are never tested, nor are there methods for estimating
  mixing coefficients from data. Additionally, for many common classes of
  processes (Markov processes, ARMA processes, etc.) general functional forms
  for various mixing rates are known, but not specific coefficients.  We
  present the first estimator for beta-mixing coefficients based on a single
  stationary sample path and show that it is risk consistent. Since mixing
  rates depend on infinite-dimensional dependence, we use a Markov
  approximation based on only a finite memory length $d$. We present
  convergence rates for the Markov approximation and show that as
  $d\rightarrow\infty$, the Markov approximation converges to the true mixing
  coefficient. Our estimator is constructed using $d$-dimensional histogram
  density estimates. Allowing asymptotics in the bandwidth as well as the
  dimension, we prove $L^1$ concentration for the histogram as an intermediate
  step. Simulations wherein
  the mixing rates are calculable and a real-data example demonstrate
  our methodology. 
\end{abstract}

%\begin{keyword}[class=AMS]
%\kwd[Primary ]{37A25}
%\kwd[; secondary ]{60G10}
%\end{keyword}

\begin{keyword}
\kwd{density estimation, dependence, time-series, total-variation, mixing, absolutely regular processes, histograms}
\end{keyword}

% history:
%\received{\today}

%\tableofcontents

\end{frontmatter}

\section{Introduction}
\label{sec:introduction}

The ordinary theory of statistical inference is overwhelmingly concerned with
independent observations, but the exact work done by assuming independence is
often mis-understood.  It is not, despite a common impression, to guarantee
that large samples are representative of the underlying population, ensemble,
or stochastic source.  If that were all that were needed, one could use the
ergodic theorem for dependent sources equally well.  Rather, assuming
independence lets statistical theorists say something about the {\em rate} at
which growing samples approximate the true distribution.  Under statistical
independence, every observation is completely unpredictable from every other,
and hence provides a completely new piece of information about the source.
Consequently, the most common measures of information --- including the
Kullback-Leibler divergence between probability measures, the Fisher
information about parameters, and the joint Shannon entropy of random variables
--- are all strictly proportional to the number of observations for IID
sources.  Under dependence, later events are more or less predictable from
earlier ones, hence they do {\em not} provide completely new observations, and
information accumulates more slowly.  Assuming ergodicity alone, the
convergence of samples on the source can be arbitrarily slow, and statistical
theory is crippled.  Without more stringent assumptions than ergodicity, one is
always effectively in an $n=1$ situation no matter how many observations one
has.

To go beyond independence, statistical theory needs assumptions on the
data-generating processes which control the rate at which information
accumulates.  For time series analysis, the most natural replacement for
independence is requiring the asymptotic independence of events far apart in
time, or {\bf mixing}.  Mixing quantifies the decay in dependence as the future
moves farther from the past.  There are many definitions of mixing of varying
strength with matching dependence coefficients \citep[see][for
reviews]{Doukhan1994,DedeckerDoukhan2007,Bradley2005}, but many of the results
in the statistical literature focus on $\beta$-mixing or absolute regularity.
Roughly speaking (see \autoref{defn:beta-mix} below for a precise statement),
the $\beta$-mixing coefficient at lag $a$ is the total variation distance
between the actual joint distribution of events separated by $a$ time steps and
the product of their marginal distributions, i.e., the $L^1$ distance from
independence.

Much of the theoretical groundwork for the analysis of mixing processes was
laid years ago \citep{Withers1981, Bradley1983, Eberlein1984, PhamTran1985,
  AthreyaPantula1986a, Tran1989, Yu1993, Yu1994}, but it remains an active
topic in probability, statistics and machine learning.  Among the many works on
this topic, we may mention the study of non-parametric inference under mixing
conditions by \citet{Bosq1998}, consistent time series forecasting by support
vector machines \citep{SteinwartAnghel2009}, probably approximately correct
learning algorithms with mixing inputs
\citep{Vidyasagar1997,KarandikarVidyasagar2009} and stability-based
generalization error bounds \citep{MohriRostamizadeh2010}.  To actually {\em
  use} such results, however, requires knowing the $\beta$-mixing coefficients,
$\beta(a)$.

Many common time series models are known to be $\beta$-mixing, and the rates of
decay are known up to constant factors given the true parameters of the
process. Among the processes for which such results exist are ARMA models
\citep{Mokkadem1988}, GARCH models \citep{CarrascoChen2002}, and certain Markov
processes --- see \citet{Doukhan1994} for an overview of such results.
(\citet{FryzlewiczSubba-Rao2011} derive upper bounds for the $\alpha$- and
$\beta$-mixing rates of non-stationary ARCH processes.)  With few exceptions,
however, these results do not give the actual mapping from parameters to mixing
coefficients.  For example, it is known that the mixing coefficients of the
ARMA process at time lag $a$ are $O(\rho^{a})$ for some $0<\rho < 1$. While
knowledge of the ARMA parameters determine the joint distribution, no one has
yet figured out how to map from these parameters to the constants, or even to
$\rho$. To our knowledge, only \citet{Nobel2006} approaches a solution to the
problem of estimating mixing rates by giving a method to distinguish between
different polynomial mixing rate regimes through hypothesis testing.  Thus the
theoretical results which presume that the mixing coefficients are known cannot
actually be applied to assist in the analysis of data, or even of parametric
models.

These issues also arise in interpreting the output of Markov chain Monte Carlo
(MCMC) algorithms.  Even when the Markov chain is in equilibrium, sampling from
the desired invariant distribution, the samples are dependent.  How much
dependence persists across samples is a very important issue for users wishing
to control Monte Carlo error, or planning how long a run they need.  In some
rare cases, theoretical results show that certain MCMC algorithms are rapidly
mixing, meaning again roughly that $\beta(a) = O(\rho^{a})$.  Such results
generally do not give $\rho$, let alone $\beta(a)$, which is what users
would need.

We present the first method for estimating the $\beta$-mixing coefficients for
stationary time series data given a single sample path. Our methodology can be
applied to real data assumed to be generated from some unknown $\beta$-mixing
process.  Additionally, it can be used to examine known mixing processes,
thereby determining exact mixing rates via simulation.  (This includes, but is
not limited to, MCMC algorithms.)  \autoref{sec:estimation} defines the
$\beta$-mixing coefficient, our estimator of it, and states our main results
on convergence rates and consistency for the estimator.
\autoref{sec:conv-hist} gives an intermediate result on the $L^1$ convergence
of the histogram estimator with $\beta$-mixing inputs which is asymptotic in
the dimension of the target distribution in addition to the bandwidth. Some of
our results and techniques here are of independent interest for
high-dimensional density estimation.  \autoref{sec:proof} proves the main
results from \autoref{sec:estimation}.  \autoref{sec:examples} demonstrates the
performance of our estimator in three simulated examples, providing good
recovery of known rates in simple settings as well as providing insight into
unknown mixing regimes, and also examines a dataset containing
recession indicators for developed economies.
\autoref{sec:discussion} concludes and lays out some 
avenues for future research.

\section{Estimator and consistency results}
\label{sec:estimation}

% The following definition comes
% from ~\citep{Meir2000}.
% \begin{definition}
%   Let $\sigma_t=\sigma(X_1^t)$ and
%   $\sigma_{t+m}'=\sigma(X_{t+m}^\infty)$, be the sigma fields of
%   events generated by the random variables $X_1^t=\{X_i\}_{i=1}^t$ and
%   $X_{t+m}^\infty = \{X_i\}_{i=t+m}^\infty$ respectively, where
%   $\{X_i\}_{i=1}^\infty$ are defined on the probability space $(\Omega,\cal{F},\P)$. The
%   coefficient of absolute regularity, or $\beta$-mixing coefficient,
%   $\beta(m)$, is given by
%   \begin{equation}
%     \label{eq:12}
%     \beta(m) = \sup_{t \geq 1} \E \sup_{B \in \sigma_{t+m}'} |P(B\given
%       \sigma_t) - P(B)|,
%   \end{equation}
%   where the expectation is taken with respect to $\sigma_t$. A
%   stochastic process is said to be absolutely regular, or
%   $\beta$-mixing, if $\beta(m) \rightarrow 0$ as $m\rightarrow\infty$.
% \end{definition}

In this section, we present one of many equivalent definitions of absolute
regularity and state our main results, deferring proof to \S \ref{sec:proof}.

To fix notation, let $\mathbf{X}=\{X_t\}_{t=-\infty}^\infty$ be a sequence of
random variables where each $X_t$ is a measurable function from a probability
space $(\Omega,\F,\P)$ into $\mathbb{R}^q$ with the Borel
$\sigma$-field $\mathcal{B}$.  A
block of this random sequence will be given by $\mathbf{X}_{i:j} \equiv
\{X_t\}_{t=i}^j$ where $i$ and $j$ are integers, and either may be infinite.
We use similar notation for the sigma fields generated by these
blocks.  In particular, $\sigma_{i:j}$ will denote the sigma 
field generated by $\mathbf{X}_{i:j}$, and the joint distribution of
$\mathbf{X}_{i:j}$ will be denoted $\Pij{i}{j}$. 
% We will often need
% the joint distribution of two blocks of random variables, so we define
% $\mathbf{X}_{i:j}^{k:l}\equiv (\mathbf{X}_{i:j},\ \mathbf{X}_{k:l})$, and we
% will denote their joint distribution as $\Pijkl{i}{j}{k}{l}$. 
We denote products of marginal distributions as, e.g., $\Pij{i}{j}
\otimes \Pij{k}{l}$. 

\subsection{Definitions}
\label{sec:definitions}

In this paper, we will consider only the case of stationary data. 
\begin{definition}[Stationarity]\label{def:stationary}
  A sequence of random variables $\mathbf{X}$ is {\em stationary} when all its
  finite-dimensional distributions are invariant over time: for all integers
  $t,\ t'$ and all
  non-negative integers $i$, the distribution of $\mathbf{X}_{t:t+i}$
  is the same as that of $\mathbf{X}_{t':t'+i}$.
\end{definition}

There are many equivalent definitions of $\beta$-mixing (see for
instance~\citep{Doukhan1994}, or~\citep{Bradley2005} as well
as~\citep{Meir2000} or~\citep{Yu1994}), however the most intuitive is that
given in~\citet{Doukhan1994}.
\begin{definition}[$\beta$-mixing]
  \label{defn:beta-mix}
  For each $a \in \mathbb{N}$, the {\em coefficient
    of absolute regularity}, or {\em $\beta$-mixing coefficient}, $\beta(a)$,
  is
  \begin{equation}
    \label{eq:13}
    \beta(a) := \TV{\Pij{-\infty}{0} \otimes \Pij{a}{\infty} -
      \P_{-\infty:0, a:\infty}} 
  \end{equation}
  where $\| \cdot \|_{TV}$ is the total variation norm, and
  $\P_{-\infty:0,\ a:\infty}$ is the joint distribution of the blocks
  $(\mathbf{X}_{-\infty:0},\ \mathbf{X}_{a:\infty})$. A
  stochastic process is said to be {\em absolutely regular}, or {\em
    $\beta$-mixing}, if $\beta(a) \rightarrow 0$ as
  $a\rightarrow\infty$.
\end{definition}

Loosely speaking, \autoref{defn:beta-mix} says that the coefficient $\beta(a)$
measures the total variation distance between the joint distribution of random
variables separated by $a$ time units, $\P_{-\infty:0,a:\infty}$
and the distribution under which random 
variables separated by $a$ time units are independent,
$\Pij{-\infty}{0} \otimes \Pij{a}{\infty}$.  We note that in the most
general setting in the literature, $\beta(a) = \sup_t \TV{\Pij{-\infty}{t} \otimes \Pij{t+a}{\infty}
  - \P_{-\infty:t,t+a:\infty}}$, however, this additional generality is unnecessary for
stationary random processes $\mathbf{X}$, which is the only case we consider
here. 

As stationarity implies that distributions of blocks of random
variables are the same, and we will frequently require notation for
these distributions, we will employ the following simplifications: the
distribution of a $d$-block will be notated $\P_{[d]} =
\Pij{i}{i+d}=\Pij{j}{j+d}$ and the joint distribution of two blocks of
length $d$ separated by $a$ timepoints,
$(\mathbf{X}_{i:(i+d-1)},\mathbf{X}_{(i+d+a-1):(i+2d+a-1)})$, will be
given by $\P_{[d],a}$. In particular, $\P_{-\infty:0, a:\infty}$ will
be written as $\P_{[\infty],a}$ and similarly for the associated
sigma-fields when necessary.

\subsection{Constructing the estimator}
\label{sec:estimator}

Our result emerges in two stages. First, we recognize that the distribution of
a finite sample depends only on finite-dimensional distributions.  This leads
to an estimator of a finite-dimensional version of $\beta(a)$.  Next, we let
the finite-dimension increase to infinity with the size of the observed sample.

For positive integers $d$, and $a$, define
\begin{equation}
  \label{eq:2a}
  \beta^d(a)=\TV{\P_{[d]} \otimes \P_{[d]}-
    \P_{[d],a}}.
\end{equation}
 Also, let $\widehat{f}^d$ be the
histogram estimator of the joint density of $d$ consecutive
observations, that is
\begin{equation*}
  \widehat{f}^d(\mathbf{x}) = \frac{1}{(n-d+1)h_n^d} \sum_{i=1}^{n-d+1}{ I(\mathbf{X}_{i:i+d-1} \in B(\mathbf{x})) }
\end{equation*}
where $B(\mathbf{x})$ is the bin containing $\mathbf{x}$ and $I(\cdot)$ is the
indicator function. Similarly, let $\widehat{f}_a^{2d}$ be the $2d$-dimensional
histogram estimator of the joint density of two sets of $d$ consecutive
observations separated by $a$ time points, i.e
\begin{align*}
  \lefteqn{\widehat{f}_a^{2d}(\mathbf{x})}\\ &=
                                               \frac{1}{(n-2d-a+1)h_n^{2d}}
                                               \sum_{i=1}^{n-2d-a+1} 
  I((\mathbf{X}_{i:(i+d-1)},\mathbf{X}_{(i+d+a-1):(i+2d+a-1)}) \in
                                               B(\mathbf{x})). 
\end{align*}
Note that as we have assumed $\mathbf{X}_i \in \R^q$, in the above
definitions, $\mathbf{x}\in \R^{dq}$ and $\mathbf{x}\in\R^{2dq}$
respectively with $h=h^q$. As we assume $q$ fixed throughout, we
suppress this dependence. We discuss this issue further in a remark
following our main result in the next subsection.

We construct an estimator of $\beta^d(a)$ based on these two
histograms.\footnote{While it is clearly possible to replace histograms with
  other choices of density estimators (most notably kernel density estimators),
  histograms in this case are more convenient theoretically and
  computationally. See \S \ref{sec:discussion} for more details.} Define
\begin{equation}
  \label{eq:estimator}
  \widehat{\beta}^{d}(a) = \frac{1}{2}\int\left|\widehat{f}_a^{2d} -
    \widehat{f}^d\otimes\widehat{f}^d\right|
\end{equation}
We will show that, by having $d=d_n$ grow (slowly) with $n$, this estimator will
converge to $\beta(a)$. This can be seen most clearly by bounding the risk of
the estimator with its estimation and approximation errors:
\[
|\widehat{\beta}^{d}(a) - \beta(a)| \leq |\widehat{\beta}^{d}(a) -
\beta^{d}(a)| + |\beta^{d}(a) -  \beta(a)|.
\]
The first term is the error of estimating $\beta^d(a)$ from a random
sample. The second term is the non-stochastic error induced by
approximating the infinite dimensional coefficient, $\beta(a)$, by
its $d$-dimensional counterpart, $\beta^d(a)$.

\subsection{Assumptions and main results}
\label{sec:assumpt-main-results}

The results of this paper require two main assumptions. The first is that the
process $\mathbf{X}_1^n$ is generated by a stationary, $\beta$-mixing
distribution with density $f$. Second, we must place some conditions on the
density $f$ to ensure that the histogram estimators $\widehat{f}^d$ and
$\widehat{f}^{2d}$ will actually converge to the densities $f^d$ and $f^{2d}$.
Specifically, we will assume continuity and regularity conditions as in
\citep{FreedmanDiaconis1981b}\footnote{We discuss modifications for discrete
  distributions below, p.\ \pageref{about-discrete-processes}.}:
\begin{enumerate}
\item $f \in L^2$ and $f$ is absolutely continuous on its support, with
  a.e.\ partial derivatives $f_i=\frac{\partial}{\partial y_i} f(\mathbf{y})$
\item $f_i \in L^2$ and $f_i$ is absolutely continuous on its support, with
  a.e.\ partial derivatives $f_{ik}=\frac{\partial}{\partial y_k}
  f_i(\mathbf{y})$
\item $f_{ik} \in L^2$ for all $i,k$.
\end{enumerate}
We will presume below that $f$ has a bounded domain, but we do not list that as
a separate assumption, for two reasons.  First, even if $f$ has unbounded
support, we can always smoothly and invertibly map $\mathbb{R}^q$ into (say)
$[0,1]^q$, without disturbing any of the assumptions above, and without
changing $\beta(a)$, since total variation distance is invariant under
invertible transformations.  Second, the unbounded-domain case could be handled
by, basically, a remainder argument: the support of the histogram density
estimate is effectively set by the empirical range of the $\mathbf{X}_1^n$,
and, with high and growing probability, this includes the overwhelming majority
of the $f$ probability-mass.  Following this through would however needlessly
complicate our proofs.

Under these conditions, we can state the two main results of this paper.
% If we allow $d$ to grow with $n$, then we can approximate the
% coefficients of any $\beta$-mixing process.
Our first theorem in this section establishes consistency of
$\widehat{\beta}^{d_n}(a)$ as an estimator of $\beta(a)$ for all $\beta$-mixing
processes.
% \autoref{thm:main} gives finite sample bounds on the
% estimation error while some measure theoretic arguments contained in
% \S\ref{sec:proof} show that the approximation error must go to zero as $d_n
% \rightarrow \infty$.
\begin{theorem}
  \label{thm:two}
  Let $\mathbf{X}_1^n$ be a sample from an arbitrary $\beta$-mixing process
  satisfying the conditions above.
  % Let
  % $d_n=O(\exp\{W(\log n)\})$ where $W$ is the Lambert $W$
  % function.\footnote{The Lambert $W$ function is defined as the
  %   (multivalued) inverse
  %   of $f(w) = w\exp\{w\}$. Thus, $O(\exp\{W(\log n)\})$ is bigger
  %   than $O(\log\log n)$ but smaller than $O(\log n)$. See for
  %   example~\citep{CorlessGonnet1996}. }
  Provided that $nh_n^{d_n} \rightarrow \infty$, $d_nh_n
  \rightarrow 0$, $d_n \rightarrow \infty$, and $h_n\rightarrow 0$ as
  $n\rightarrow \infty$, then for any $\epsilon>0$,
  $$\lim_{n\rightarrow\infty}\P\left(\left|\widehat{\beta}^{d_n}(a)-\beta(a)\right|>\epsilon\right)=0.$$
\end{theorem}
In this general case, we need doubly asymptotic results about the
histogram estimator, that is, the histogram estimators require
shrinking bin widths in increasingly higher dimensions. In
\autoref{lem:optimalrates}, we give appropriate rates for $h_n$ and $d_n$
to achieve the optimal rate of convergence for the
estimation error. Of course, with discrete data or Markov models, we
may not need doubly asymptotic results since either the maximum number
of bins or the memory length of the process is fixed.

For a Markov process of order $d$ or less, $\beta^d(a)=\beta(a)$. In this case,
we can give the convergence rate of our estimator.
\begin{theorem}
  \label{thm:markov-rate}
  Let $\mathbf{X}_1^n$ be a sample from a  Markov process of order no
  larger than $d$. Then, taking the bandwidths to be
  $h_n=O((W(n)/n)^{2d/(2d+1)})$ for $\widehat{f}^d$ and
  $h_n=O((W(n)/n)^{4d/(4d+1)})$ for $\widehat{f}^{2d}$,
  \begin{equation}
    \E[|\widehat{\beta}^d(a)-\beta(a)|] = O\left(\sqrt{\frac{W(n)}{n}}\right).
  \end{equation}
\end{theorem}
Here, $W(n)$ is the Lambert $W$ function, i.e., the (multivalued) inverse of
$g(w) = w\exp\{w\}$ \citep{CorlessGonnet1996}. As $O(W(n))$ is bigger than
$O(\log\log n)$ but smaller than $O(\log n)$, our estimator attains nearly
parametric convergence rates when the data come from a Markov process of order
$\leq d$. Likewise, if we were interested in estimating only the finite
dimensional mixing coefficients $\beta^d(a)$ rather than $\beta(a)$,
\autoref{thm:markov-rate} gives the rate of convergence.

The proof of these two theorems requires showing the $L^1$ convergence of the
histogram density estimator with $\beta$-mixing data. We present this
result in \autoref{sec:conv-hist}. First, we discuss some important
details regarding these theorems and provide a method for choosing the
number of bins in 
the histogram.

\paragraph{Remarks on Discrete-Valued Processes}
\label{about-discrete-processes}

An important special case is that of discrete-valued $\beta$-mixing processes,
including Markov chains in the strict sense.  The symbols of any finite
alphabet can be represented by points in $\mathbb{R}$, and $\beta$-mixing
coefficients are invariant to the choice of representation.  Of course, the
resulting distributions in $\mathbb{R}^q$ will be mixtures of delta functions,
and so not absolutely continuous.  However, for a finite number of points,
there will exist a maximum bin-width below which each bin of the histogram will
contain at most one positive-probability point.  While the corresponding
histogram density estimator is always absolutely continuous, it is easily seen that below
this bin-width, the histogram estimator has the same $\beta$-mixing
coefficient as the true distribution.  Moreover, the errors of estimation and
approximation dealt with in the proofs of our theorems and lemmas are, if
anything, even smaller for finite-alphabet processes, making our results
somewhat conservative.

\paragraph{Remarks on the interaction between $q$, $d_n$, and $h_n$}
\label{about-dimensionality}

The interaction between the dimension of the data and the bandwidth of the
histogram (equivalently the number of bins used) is important for
applications. However, we use $d_n$ to represent more than the ``dimension'' of
the dataset: $d_n$ is the product of the dimension of the range of
$\mathbf{X}$, $q$, and the length of the Markov 
approximation. For example, suppose that the data consist of $q$
time-series (we will use a data set  with $q=6$ in \autoref{sec:examples}) which is
known to be first-order Markov. Then, $d=d_n=q\times 1$ for all $n$ is
sufficient for our estimator to achieve the convergence rate specified in
\autoref{thm:markov-rate} provided $h_n$ is chosen appropriately. However, if
this same dataset is non-Markovian but still $\beta$-mixing, then in order to
estimate $\beta(a)$ consistently, we must use successively larger Markov
approximations as $n\rightarrow\infty$. This means taking $d_n=q \times
\gamma(n)$ for an increasing function $\gamma$. Thus, even though the
data are
of fixed dimension $q$, the dimension over which the histogram is constructed
must increase to infinity to give estimation consistency as in
\autoref{thm:two}. \autoref{lem:optimalrates} shows that if
$\gamma(n)\sim\exp\{W(\log n)\}$, then there is a polynomial rate for the
bandwidth which satisfies the conditions of
\autoref{thm:two}. Of course for a
fixed dataset application, one must choose the bandwidth and potentially the
Markov approximation. One could fix the Markov approximation and use
cross-validation for the bandwidth selection, but we have found that this
procedure tends to choose bandwidths which are too small, resulting in a
positive estimation bias. In the remainder of this paper, we suppress
$q$ and work directly with $d_n$. In the next section, we present a procedure for
choosing the bandwidth for a fixed dimension $d$.

\subsection{Choosing the bandwidth}
\label{sec:choosing-bandwidth}

We need some way to pick the bandwidth of our histograms, or, equivalently, the
number of bins.  If we were doing density estimation for its own sake, the
natural thing to do would be some sort of cross-validation with a loss based on
the density.  However, we do not really care about the density.  Instead, we
suggest an approach which might be called ``calibration with surrogate
data''.\footnote{``Surrogate data'' methods are used extensively in nonlinear
  time series analysis for hypothesis testing, especially testing the
  hypothesis that there is some nonlinear deterministic structure
  \citep{KantzSchreiber2004}.  Note that we are using the word ``calibration''
  here in the sense in which measuring instruments are calibrated against
  standards, not the sense in which it is used in evaluating probabilistic
  forecasts \citep{GneitingBalabdaoui2007}, or the estimation
  technique from econometrics \citep{HansenHeckman1996}.} In
outline, we construct an artificial stochastic process which shares many
distributional features with the data, but where $\beta$ is known exactly.
This lets us see which bandwidth leads to the most accurate estimation of the
reference value of $\beta$, and this is at least a reasonable guess at the
appropriate bandwidth on the data.  To flesh this out, we first describe the
construction of the surrogate process with known $\beta$, and then the full
bandwidth-selection procedure.

We regard $d$, the order of the Markov approximation, as fixed, and note that, by
our error analysis in \autoref{thm:markov-rate}, the lag $a$ should not affect
the appropriate bandwidth.\footnote{Apart from leading to slight changes to the
  effective $n$.}  We thus proceed to construct a process where, for a given
$d$, the mixing coefficient $\beta^d(1)$ has a known value and then try to
optimize the variance of our estimator.

% \begin{enumerate}
% \item Set $M=n/d$ (rounding up or down as desired).
% \item Form a new series $\mathbf{Y}_1^{dM}$ by drawing a random block of length $d$
%   from $\mathbf{X}_1^n$.
% \item For $m=2,\ldots,M$ repeat the previous block with probability
%   0.5 or draw a new block from $\mathbf{X}_1^n$.
% \end{enumerate}
To generate the surrogate process, we sample blocks of length $d$ from
the data $\mathbf{X}_{1:n}$. We start with a random $d$-block $Z_1$ then
repeat that block with probability $1/2$ and resample a new $d$-block
with the remaining probability. We continue this process $M=n/d$ times
(rounding up or down as desired) until we have
a new sequence $\mathbf{Y}$ of length $Md$. Notice that the
$\mathbf{Y}$ process has the same marginal distribution as the
empirical marginal distribution of $\mathbf{X}$. Its higher-dimensional marginal
distributions are not guaranteed to match those of the data, because of the
abrupt change from one block to the next, and because of the random repetition of
blocks.  However, if $d \ll n$, the $d$-dimensional marginals should be close.
% Also, we specifically do {\em not} want to match the $3d$-dimensional
% marginals, since those determine $\beta^{(d)}(d)$, which we aim to
% control.
Based on this intuition, we present \autoref{alg:calibrate}
for choosing the bandwith in the histograms for continuous data
$\mathbf{X}_{1:n}$. We then prove two results justifying its use.
\begin{algorithm}
\SetKwInOut{Input}{input}\SetKwInOut{Output}{output}
  \Input{A timeseries $\mathbf{X}_{1:n}$; a finite approximation length
    $d$; desired number of replications $K$; candidate bandwidths
    $h=\{h_1,\ldots,h_H\}$}
  \Output{A bandwidth $h$}
  \BlankLine
  $M \leftarrow \lfloor n/d \rfloor$\;
  Calculate the $d$-dimensional histogram of $\mathbf{X}_{1:n}$,
  $[\widehat{p}_h^1,\ldots,\widehat{p}_h^J]$ for each $h$\;
  Estimate $\widehat{p}_h= \sum_{j=1}^J (\widehat{p}_h^j)^2$\;
  Calculate $\kappa$ as in \autoref{prop:calibrateTruth}\;
  \For{$k = 1$ \KwTo $K$}{
    \emph{generate a new series} $\mathbf{Y}^{(k)}$\;
    \For{$m=1$ \KwTo $M$}{
      Draw $U$ standard uniform\;
      \eIf{$m=1$ {\bf or} $U>1/2$}{
        Draw a random index $i \in \{1,\ldots, n-d+1\}$\;
        Set $Z^{(m)}_{1:d}\leftarrow\mathbf{X}_{i:i+d}$ and append this to $\mathbf{Y}^{(k)}$\;
      }{Set $Z^{(m)}_{1:d}=Z^{m-1}_{1:d}$ and append this to $\mathbf{Y}^{(k)}$\;}
    }
    \emph{estimate the mixing coefficient} $\widehat{\beta}^d_{(k)}(1)$
    for each $h$\;
  }
  \emph{Return the $h$ which minimizes the estimated
    variance} $\sum_{k=1}^K
  \left[\widehat{\beta}^d_{(k)}(1)-0.5(1-\kappa)(1-\widehat{p}_h)\right]^2.$ 
  \caption{Method to choose the bandwidth $h$ via calibration with
    surrogate data}
  \label{alg:calibrate}
\end{algorithm}

The first result presents the exact mixing coefficient for $\mathbf{Y}$.
\begin{proposition}
\label{prop:calibrateTruth}
Suppose that the marginal density $f$ of $\mathbf{X}$
is absolutely continuous. Fix $d$, and let $u$ be the set of unique
length-$d$ sequences appearing
in $\mathbf{X}_{1:n}$, where sequence $w \in u$ appears $n_w$ times.  Set
\[
\kappa = \sum_{w \in u}{\left(\frac{n_w}{n-d+1}\right)^2}
\]
Then for $\mathbf{Y}$ constructed as in \autoref{alg:calibrate},
\[
\beta^{d}(1) = 0.5(1-\kappa).
\]
\end{proposition}

\begin{proof}
First, let $Q=\P_{[d]} \otimes\P_{[d]}$ be the product measure of
$d$-blocks and call $P$ the joint 
distribution of $2d$-blocks associated to a hypothetical, infinite sequence
$\mathbf{Y}$ generated, say, by draws of $d$-blocks from the distribution of
$\mathbf{X}$ rather than its empirical counterpart. The total variation distance between the joint
distribution of two identical copies of the same block, and the joint distribution of two
independent blocks, is therefore $1$ (since the $P$ measure of this
set is 0).\footnote{Requiring any coordinate to be shared between two $d$-vectors $B$ and $C$
  forces the $(B,C)$ joint distribution to put probability 1 on a
  lower-dimensional subspace of the product space, which would have measure 0
  under the product measure, leading to a total variation distance of
  1 from independence.}  By the same reasoning, two blocks
which share some coordinates (even if not in the same positions within a block)
have a TV distance of $1$ from independence. For the diagonal $D$, we
therefore have that $P(D)=1/2$ while $Q(D)=0$. Thus, the total
variation between $P$ and $Q$ is at least $1/2$. To  show that it is
no more than $1/2$, suppose that there was another set $A$ where
$|P(A) - Q(A)| > 1/2$.  Without loss of generality, say $P(A) > Q(A)$.
(If the inequality went the other way, use $A^c$.)  Then $P(A) >
1/2 + Q(A)$, so $A$ must intersect the diagonal $D$; let $A = (A \cap D ) \cup
(A \cap D^c) = B \cup C$.  As disjoint sets, $P(A) = P(B) + P(C)$,
likewise for $Q$,  so $P(A) - Q(A) = P(B) - Q(B) + P(C) - Q(C)$, but $P(C)
=0.5 Q(C)$ and $Q(B) = 0$, thus $P(A) - Q(A) = P(B) - 0.5Q(C)$.    But
$P(B) \leq P(D) = 1/2$, and $Q(C) \geq 0$, so $P(A) - Q(A) \leq 1/2$.
Therefore, the total variation distance between $P$ and $Q$ is $1/2$.

Now, observe that $\kappa$ is the probability that two
independently drawn blocks in $\mathbf{Y}$ will, by chance, happen to be equal.
Thus, $\kappa$ simplifies to $1/(n-d+1)$ when all the blocks are distinct, as they
ought to be when the generating process has an absolutely continuous
distribution.
% To calculate the $\beta^{d}(1)$ which results from $\mathbf{Y}$, therefore, we
% need only examine the $d$-block pairs which we get by
% stepping along $\mathbf{Y}$, and tracking whether they share coordinates
% (including being identical copies), or are rather independent.  Starting with
% position 1 in $\mathbf{Y}$, we find first the pair of blocks $B^1_{1:d},
% B^2_{1:d}$.
% Moving one step forward, we find the pair $(B^{1}_{2:d}B^2_1), (B^{2}_{2:d}B^3_1)$,
% and so forth. there are in all $M/2$ successive block pairs where the two
% halves of the pair are identical by construction.  Next there come $d+1$ block
% pairs where the two halves of the pair are independent.  Finally, there come
% $d-1$ pairs where the two blocks share some but not all of their
% coordinates.\marginpar{Is this always the order? Double check.}  After this,
% one may resume the whole process over again, with a fresh set of blocks, which
% will make the same contribution to the calculation of $\beta$.  Thus, of the
% $4d$ block pairs, there are $3d-1$ which contribute $1-\kappa$ each of total
% variation.  The proposition follows.
The factor $1-\kappa$ corrects for the fact that the empirical distributions
we are using put probability $\kappa$ on the low-dimensional
subspace $D$.
\end{proof}

With this result in hand, we will resample many time-series $\mathbf{Y}$ from
our data and choose the bandwidth by minimizing the variance over some grid of
$h$ values (equivalently number of bins). We do not minimize the mean squared
error, because the bias of the estimator depends on the mixing
coefficient. Minimizing the bias in an attempt to estimate a mixing coefficient
near $1/2$ may result in badly biased estimates of coefficients near zero. Our
next result calculates the expectation of this estimator and therefore its
bias.

\begin{proposition}
  \label{prop:calibrateExpect}
  Suppose that the marginal density $f^d$ of $\mathbf{X}_{[d]}$
  is absolutely continuous.
  The expected value of $\widehat{\beta}^d(1)$ based on
  $\mathbf{Y}$ is given by
  \[
  \E\left[\widehat{\beta}^d(1)\right] =
  0.5(1-p_h)(1-\kappa),
  \]
  where $p_h=\sum_{j=1}^J (\int _{B_j} f^d)^2$ and $\{B_j\}_{j=1}^J$ are
  the bins for a histogram with bandwidth $h$.
  % Note that $p_h$ is the probability of the $2d$-diagonal under the product measure
  % implied by the $d$-dimensional histogram
\end{proposition}
\begin{proof}
  By discretizing into histograms, the diagonal is no longer a measure
  0 set, and in fact contains more mass than $1/2$.
  By construction, the product distribution $Q$ puts mass
  $p_h$ on the discretized diagonal $D_h$ while the joint distribution, $P$,
  puts mass $0.5(1 + p_h^2)$ on the discretized diagonal. Therefore,
  under the histogram with bandwidth $h$, the total variation
  distance between $Q$ and $P$ is given by
  \begin{align*}
    \sup_A |Q(A)-P(A)| &= \sup_{D_h,D_h^c}|Q(D_h)-P(D_h)|\\
                       &= |p_h^2-0.5(1+p_h^2)| \vee |(1-p_h^2) -
                         0.5(1+(1-p_h^2))|\\
                       &= 0.5|p_h^2-1| \vee 0.5|1-p_h^2|\\
                       &=0.5(1-p_h^2).
  \end{align*}
\end{proof}

\section{$L^1$ convergence of histograms}
\label{sec:conv-hist}

Convergence of density estimators is thoroughly studied in the statistics and
machine learning literature. Early papers on the $L^\infty$ convergence of
kernel density estimators (KDEs) include
\citep{Woodroofe1967,BickelRosenblatt1973,Silverman1978};
\citet{FreedmanDiaconis1981} look specifically at histogram estimators, and
\citet{Yu1993} considers the $L^\infty$ convergence of KDEs for $\beta$-mixing
data and shows that the optimal IID rates can be
attained. \citet{Tran1994} proves $L^2$ convergence for histograms
under $\alpha$- and $\beta$-mixing.
\citet{DevroyeGyorfi1985} argue that $L^1$ is a more appropriate metric for
studying density estimation, and \citet{Tran1989} proves $L^1$ consistency of
KDEs under $\alpha$- and $\beta$-mixing. As far as we are aware, ours is the
first proof of $L^1$ convergence for histograms under
$\beta$-mixing.

Our proof requires the method of blocking used in~\citet{Yu1993,Yu1994}
following earlier results such as \citet{Eberlein1984,VolkonskiiRozanov1959}
and going back to Bernstein.  The idea here is to translate IID results
directly to mixing sequences, with corrections that reflect the
$\beta$-coefficients and the length of the process.  To do this, one creates an
imaginary sequence of independent blocks of data from the original dependent
sequence.  Ordinary IID results apply to the imaginary sequence, which also
approximates the actual dependent sequence, to within a known tolerance.

Consider a sample $\mathbf{X}_{1:n}$ from a stationary $\beta$-mixing sequence
with density $f$.  Let $m$ and $\mu$ be positive integers such that
$2m\mu=n$.  Now imagine dividing $\mathbf{X}_{1:n}$ into $2\mu$ blocks,
each of length $m$. Identify the blocks as follows:
\begin{align*}
  U_j &= \{X_i: 2(j-1)m + 1 \leq i \leq (2j-1)m\},\\
  V_j &= \{X_i : (2j-1)m + 1 \leq i \leq 2jm\},
\end{align*}
for $1\leq j \leq \mu$.  Let $\mathbf{U}$ be the entire sequence of odd
blocks $\{U_j\}_{j=1}^{\mu}$, and let $\mathbf{V}$ be the sequence of even
blocks $\{V_j\}_{j=1}^{\mu}$. A visual representation is shown in
\autoref{fig:blocking}. Finally, let $\widetilde{\mathbf{U}}$ be a sequence of
blocks which are independent of $\mathbf{X}_{1:n}$ but such that each block has
the same distribution as a block from the original sequence. That is, construct
$\widetilde{U}_j$ such that
\begin{align}
  \label{eq:2}
  \mathcal{L}(\widetilde{U}_j) = \mathcal{L}(U_j) = \mathcal{L}(U_1),
\end{align}
where $\mathcal{L}(\cdot)$ means the probability law of the argument. The blocks
$\widetilde{\mathbf{U}}$ are now an IID block sequence, in that for
integers $i,j\leq 2\mu$, $i\neq j$, $\widetilde{U}_i \indep \widetilde{U}_j$ so standard
results about IID random variables can be applied to these blocks.
(See \citep{Yu1994} for a more rigorous analysis of blocking.) We now
state the main result of this section.

\begin{figure}
  \centering
  \begin{tikzpicture}[scale=.5]
    \draw [fill=greenmain,ultra thick] (0,0) rectangle (2,1);
    \draw [fill=orangemain,ultra thick] (2,0) rectangle (4,1);
    \draw [fill=greenmain,ultra thick] (4,0) rectangle (6,1);
    \draw [fill=orangemain,ultra thick] (6,0) rectangle (8,1);
    \draw [fill] (8.5,.5) circle [radius=.1];
    \draw [fill] (9,.5) circle [radius=.1];
    \draw [fill] (9.5,.5) circle [radius=.1];
    \draw [fill=greenmain,ultra thick] (10,0) rectangle (12,1);
    \draw [fill=orangemain,ultra thick] (12,0) rectangle (14,1);
    \draw [fill] (14.5,.5) circle [radius=.1];
    \draw [fill] (15,.5) circle [radius=.1];
    \draw [fill] (15.5,.5) circle [radius=.1];
    \draw [fill=greenmain,ultra thick] (16,0) rectangle (18,1);
    \draw [fill=orangemain,ultra thick] (18,0) rectangle (20,1);
    \node at (1,.5) {$U_1$};
    \node at (3,.5) {$V_1$};
    \node at (5,.5) {$U_2$};
    \node at (7,.5) {$V_2$};
    \node at (11,.5) {$U_j$};
    \node at (13,.5) {$V_j$};
    \node at (17,.5) {$U_{\mu}$};
    \node at (19,.5) {$V_{\mu}$};
    \footnotesize
    \node at (11,-.5) {$\leftarrow m \rightarrow$};
    \node at (13,1.5) {$\leftarrow m \rightarrow$};
  \end{tikzpicture}
  \caption{The blocking procedure divides
    $\mathbf{X}_{1:n}$ into $2\mu$ alternating blocks $U_j$
    (orange) and $V_j$ (green)
    each of length $m$.}
  \label{fig:blocking}
\end{figure}
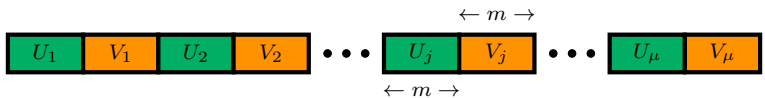

\begin{theorem}
  \label{thm:one}
  Let
  \begin{equation}
    \label{eq:hist}
    \widehat{f}(x) := \frac{1}{nh^d} \sum_{i=1}^n I(X_i \in B(x))
  \end{equation}
  be a histogram density estimator based on a 
  sample $\mathbf{X}_{1:n}$ from a $\beta$-mixing sequence with
  stationary density $f$, then for all
  $\epsilon>\E\left[\int |\widehat{f}-f|\right]$, and any natural
  numbers $m$ and $\mu$ such that $2m\mu\leq n$,
  \begin{align}
    \label{eq:5}
    \P\left(\int |\widehat{f}-f| > \epsilon\right) &\leq
    2\exp\left\{-\frac{\mu\epsilon_1^{2}}{2}\right\}+ 2\mu\beta(m)
  \end{align}
  where $\epsilon_1 = \epsilon-\E\left[\int |\widehat{f}-f|\right]$.
\end{theorem}

This theorem demonstrates a clear tradeoff between the mixing behavior of
the stochastic process and the ability to concentrate the estimator
close to the truth. For arbitrary $\beta$-mixing processes, we cannot
actually say much about the quality of this estimator other than that
given enough data, it will eventually do well. For this reason, one
generally assumes that the mixing coefficients
$\beta(a)$ display particular asymptotic behaviors like exponential or
polynomial decay.

To prove \autoref{thm:one}, we use the blocking method of~\citep{Yu1994} to
transform the dependent $\beta$-mixing sequence into a sequence of nearly
independent blocks. We then apply McDiarmid's inequality to the blocks
to derive asymptotics in the bandwidth of the histogram as well as the
dimension of the target density. For completeness, we state a version
of Yu's
blocking result and McDiarmid's inequality before proving the doubly
asymptotic histogram convergence for IID data. Combining these lemmas
allows us to prove concentration results for histograms based on
$\beta$-mixing inputs.

\begin{lemma}[Lemma 4.1 in~\citep{Yu1994}]
  \label{lem:yu}
  Let $\phi$ be an event with respect to the block sequence
  $\mathbf{U}$. Then,
  \begin{equation}
    \label{eq:8}
    |\P[\phi] - \widetilde{\P}[\phi]| \leq \mu\beta(m),
  \end{equation}
  where the first probability $\P$ is with respect to the dependent block
  sequence, $\mathbf{U}$, and
  $\widetilde{\P}$ is the $\mu$-fold product measure created with
  the marginal distribution of each block $\mathbf{U}$,
  i.e. $\widetilde{\P} = (\P_{[m]})^{\mu}$.
\end{lemma}
This lemma essentially gives a method of applying IID results to
$\beta$-mixing data. Because the dependence decays as we increase the
separation between blocks, widely spaced blocks are nearly independent
of each other. In particular, the difference between probabilities of
events generated by
these nearly independent blocks and probabilities with respect to blocks which are
actually independent can be controlled by the $\beta$-mixing
coefficient.

\begin{lemma}[McDiarmid Inequality~\citep{McDiarmid1989}]
  \label{lem:mcdiarmid}
  Let $X_1,\ldots,X_n$ be independent random variables, with $X_i$
  taking values in a set $A_i$ for each $i$. Suppose that the
  measurable function $f:\prod A_i \rightarrow \R$ satisfies
  \[
  |f(\mathbf{x})-f(\mathbf{x}')| \leq c_i
  \]
  whenever the vectors $\mathbf{x}$ and $\mathbf{x}'$ differ only in
  the $i^{th}$ coordinate. Then for any $\epsilon>0$,
  \[
  \P(f - \E f > \epsilon) \leq \exp\left\{-\frac{2\epsilon^2}{\sum
      c_i^2} \right\}.
  \]
\end{lemma}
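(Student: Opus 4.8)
The plan is to prove this by the martingale method, exhibiting $f - \E f$ as the terminal value of a martingale whose increments have bounded range, and then combining Hoeffding's lemma with a Chernoff bound. Concretely, I would fix $f$ and the independent variables $X_1,\ldots,X_n$, set $\F_k = \sigma(X_1,\ldots,X_k)$ with $\F_0$ trivial, and define the Doob martingale $Y_k = \E[f(X_1,\ldots,X_n)\mid\F_k]$ for $k=0,\ldots,n$. By construction $Y_0 = \E f$ and $Y_n = f(X_1,\ldots,X_n)$, so $f - \E f = \sum_{k=1}^n D_k$ where $D_k = Y_k - Y_{k-1}$ is a martingale difference sequence, i.e.\ $\E[D_k\mid\F_{k-1}]=0$.

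The crux of the argument, and the step I expect to be the main obstacle, is to show that each increment $D_k$ has range at most $c_k$ conditional on $\F_{k-1}$; here independence is essential. Because the $X_i$ are independent, conditioning on $\F_k$ integrates out only the future coordinates $X_{k+1},\ldots,X_n$, so for fixed past values I can write $Y_k = \phi_k(X_k)$ and $Y_{k-1} = \E_{X_k}[\phi_k(X_k)]$, where
\[
\phi_k(x) = \E\bigl[f(X_1,\ldots,X_{k-1},x,X_{k+1},\ldots,X_n)\mid\F_{k-1}\bigr]
\]
and the averaging in $Y_{k-1}$ is over $X_k$ with the past still fixed. Hence $D_k = \phi_k(X_k) - \E_{X_k}[\phi_k(X_k)]$. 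Setting the $\F_{k-1}$-measurable quantities $U_k = \sup_x\phi_k(x) - \E_{X_k}\phi_k$ and $L_k = \inf_x\phi_k(x) - \E_{X_k}\phi_k$ gives $L_k \leq D_k \leq U_k$ with
\[
U_k - L_k = \sup_x\phi_k(x) - \inf_{x'}\phi_k(x') \leq c_k,
\]
the final inequality following because $\phi_k$ is an average over the independent future of a function whose values differ by at most $c_k$ when only the $k$th coordinate is changed, so the bounded-differences hypothesis passes through the expectation.

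With this conditional range bound in hand, the remainder is routine. I would invoke Hoeffding's lemma: for a random variable $D_k$ with $\E[D_k\mid\F_{k-1}]=0$ and conditional range contained in an interval of length $c_k$, one has $\E[e^{sD_k}\mid\F_{k-1}] \leq \exp\{s^2 c_k^2/8\}$ for every $s>0$. Peeling off the increments one at a time via the tower property,
\[
\E\bigl[e^{s(f-\E f)}\bigr] = \E\Bigl[e^{s\sum_{k<n}D_k}\,\E[e^{sD_n}\mid\F_{n-1}]\Bigr] \leq \exp\Bigl\{\tfrac{s^2 c_n^2}{8}\Bigr\}\,\E\bigl[e^{s\sum_{k<n}D_k}\bigr],
\]
and iterating yields $\E[e^{s(f-\E f)}] \leq \exp\{s^2\sum_k c_k^2/8\}$. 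A Chernoff bound then gives $\P(f-\E f>\epsilon)\leq \exp\{-s\epsilon + s^2\sum_k c_k^2/8\}$ for all $s>0$, and the choice $s = 4\epsilon/\sum_k c_k^2$ optimizes the exponent to produce the claimed bound $\exp\{-2\epsilon^2/\sum_k c_k^2\}$. The only genuine content beyond bookkeeping is the conditional range estimate established above, since Hoeffding's lemma and the Chernoff optimization are standard.
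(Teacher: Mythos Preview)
Your proof is correct and follows the standard Doob-martingale argument that McDiarmid himself gave. However, the paper does not actually prove this lemma: it is stated with a citation to \cite{McDiarmid1989} and used as a black box in the proof of Theorem~\ref{thm:one}. So there is no ``paper's own proof'' to compare against; your write-up simply supplies what the authors chose to cite.
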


Since we will need  the dimension of the histograms to grow with
$n$, we prove the following lemma which provides the doubly asymptotic convergence of the
histogram estimator for IID data. It differs from standard histogram
convergence results in the bias calculation. In this case we need to
be more careful about the interaction between $d$ and $h_n$.
\begin{lemma}
  \label{lem:three}
  For an IID sample $X_1,\ldots,X_n$ from some density $f$ on $\R^d$,
  \begin{align}
    \label{eq:6}
    \E \int |\widehat{f}-\E\widehat{f}|dx  &= O\left(1/\sqrt{nh_n^d}\right)\\
    \int |\E\widehat{f} - f|dx &= O(dh_n)+O(d^2h_n^2),
  \end{align}
  where $\widehat{f}$ is the histogram estimate using a grid with
  sides of length $h_n$.
\end{lemma}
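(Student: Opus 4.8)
The plan is to split the error into the usual stochastic (variance) term and deterministic (bias) term and handle each by a direct histogram computation, being careful to track the dependence on $d$. Write the histogram as $\widehat{f}(x) = \widehat{p}_B / h_n^d$ for $x$ in cell $B$, where $\widehat{p}_B = N_B/n$ and $N_B = \sum_{i=1}^n \mathbf{1}\{X_i \in B\}$ is $\mathrm{Binomial}(n, p_B)$ with $p_B = \int_B f$. Then $\E\widehat{f}(x) = p_B/h_n^d$ is constant on each cell.

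For the first (variance) bound, I would compute
\[
\E\int |\widehat{f} - \E\widehat{f}|\,dx = \sum_B \E\left|\widehat{p}_B - p_B\right| \le \sum_B \sqrt{\V[\widehat{p}_B]} = \sum_B \sqrt{\frac{p_B(1-p_B)}{n}} \le \frac{1}{\sqrt{n}}\sum_B \sqrt{p_B},
\]
using Jensen (or Cauchy--Schwarz) for the first inequality. Since there are at most $O(h_n^{-d})$ nonempty cells, Cauchy--Schwarz again gives $\sum_B \sqrt{p_B} \le \sqrt{(\#\text{cells})\sum_B p_B} = O(h_n^{-d/2})$, which yields the claimed $O(1/\sqrt{n h_n^d})$. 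This part is standard and I expect no real difficulty.

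For the second (bias) bound, fix a cell $B$ with center $c_B$ and volume $h_n^d$. On $B$, $|\E\widehat{f}(x) - f(x)| = |h_n^{-d}\int_B f(y)\,dy - f(x)| = |h_n^{-d}\int_B (f(y)-f(x))\,dy|$. A Taylor expansion of $f$ to second order about $x$ (assuming $f\in C^2$ with bounded derivatives, which I would state as a regularity hypothesis) gives, for $y\in B$, $f(y) - f(x) = \nabla f(x)\cdot(y-x) + \tfrac12 (y-x)^\top H(\xi)(y-x)$. The key quantitative point is that $\|y-x\|_1 \le d\,h_n$ for $y,x$ in a common cell, and $\|y-x\|_2^2 \le d\,h_n^2 \cdot \text{(something)}$; more carefully, since each of the $d$ coordinates differs by at most $h_n$, the linear term contributes $O(d h_n)$ and the quadratic term $O(d^2 h_n^2)$ after summing the cross terms over the $d\times d$ Hessian entries (each entry bounded, each coordinate difference $\le h_n$). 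Integrating over $x\in B$, multiplying by $h_n^{-d}$, and summing $\sum_B \int_B = \int$ over a region where $f$ is supported (or where the tails are negligible) turns the per-point bound into $O(d h_n) + O(d^2 h_n^2)$, using $\int f = 1$ to kill the cell count.

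The main obstacle is the bias term: unlike the one-dimensional case, one cannot simply say ``the bias is $O(h_n^2)$'' — the interaction between $d$ and $h_n$ must be made explicit, because in the intended application $d = d_n \to \infty$. The crucial inequality is the control of $\|y - x\|$ for $x,y$ in the same $d$-dimensional cube of side $h_n$, and the fact that the quadratic remainder involves a sum over $O(d^2)$ Hessian entries, which is exactly what produces the $d^2 h_n^2$ term rather than $d h_n^2$. A secondary technical point is that the sums over cells should be understood as sums over cells meeting the (effective) support of $f$; if $f$ has unbounded support one needs a mild tail condition so that the truncation error is lower order, and I would fold that into the standing assumptions on $f$.
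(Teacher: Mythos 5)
Your proposal is correct and follows essentially the same route as the paper: the variance term is bounded via the binomial variance of the cell counts plus Cauchy--Schwarz over the $O(h_n^{-d})$ cells, and the bias term via a second-order Taylor expansion in which the linear part contributes $O(dh_n)$ and the $d\times d$ Hessian contributes $O(d^2h_n^2)$. The only cosmetic difference is that you expand about $x$ rather than the bin center (and state $C^2$ regularity where the paper uses Freedman--Diaconis-style absolute-continuity conditions), which changes nothing in the result.
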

\begin{proof}[Proof of \autoref{lem:three}]
  Let $p_j$ be the probability of falling into the $j^{th}$ bin
  $B_j$. Denote the total number of bins by $J=h_n^{-d}$. Then,
  \begin{align*}
    \E \int |\widehat{f} - \E\widehat{f}| &= h_n^d \sum_{j=1}^J
    \E\left| \frac{1}{nh_n^d} \sum_{i=1}^n I(X_i \in B_j) -
      \frac{p_j}{h^d} \right|\\
    &\leq h^d_n \sum_{j=1}^J \frac{1}{nh_n^d} \sqrt{\V\left[\sum_{i=1}^n I(X_i \in B_j)\right]}%\\
    %&
    = h^d_n \sum_{j=1}^J \frac{1}{nh_n^d} \sqrt{np_j(1-p_j)}\\
    &= \frac{1}{\sqrt{n}}\sum_{j=1}^J \sqrt{p_j(1-p_j)}
    = O(n^{-1/2})O(h_n^{-d/2}) = O\left(1/\sqrt{nh_n^d}\right).
  \end{align*}
  % For the second claim, consider the bin $B_j$ centered at
  % $\mathbf{c}$. Let $I$ be the union of all bins $B_j$. Assume the
  % following regularity conditions as in~\citep{FreedmanDiaconis1981b}:
  % \begin{enumerate}
  % \item $f \in L^2$ and $f$ is absolutely continuous on $I$, with
  %   a.e.~partial derivatives $f_i=\frac{\partial}{\partial y_i} f(\mathbf{y})$
  % \item $f_i \in L^2$ and $f_i$ is absolutely continuous on $I$, with
  %   a.e.~partial derivatives $f_{ik}=\frac{\partial}{\partial y_k} f_i(\mathbf{y})$
  % \item $f_{ik} \in L^2$ for all $i,k$.
  % \end{enumerate}
  Using a Taylor expansion
  \begin{align*}
   f(\mathbf{x}) &= f(\mathbf{c}) + \sum_{i=1}^d
    (x_i-c_i)f_i(\mathbf{c})+ O(d^2h_n^2),
  \end{align*}
  where
  $f_i(\mathbf{y}) = \frac{\partial}{\partial y_i} f(\mathbf{y})$. Therefore, $p_j$
  is given by
  \begin{equation*}p_j = \int_{B_j} f(x)dx = h_n^df(c) +
    O(d^2h_n^{d+2})
  \end{equation*}
  since the integral of the second term over the bin
  is zero. This means that for the $j^{th}$ bin,
  \begin{align*}
    \E\widehat{f}_n(x)-f(x) &=
    \frac{p_j}{h_n^d} - f(x) = -\sum_{i=1}^d
    (x_i-c_i)f_i(\mathbf{c}) + O(d^2h_n^2).
  \end{align*}
  Therefore,
  \begin{align*}
    \displaystyle \int_{B_j} \left| \E\widehat{f}_n(x)-f(x)
  \right|
    &= \int_{B_j}
    \left| -\sum_{i=1}^d (x_i - c_i)f_i(\mathbf{c}) + O(d^2h_n^2)
    \right|\\
    &\leq \int_{B_j} \left| -\sum_{i=1}^d (x_i - c_i)f_i(\mathbf{c})
    \right| + \int_{B_j} O(d^2h^2)\\
   &= \int_{B_j} \left| \sum_{i=1}^d (x_i - c_i) f_i(\mathbf{c})
    \right| + O(d^2h_n^{2+d})\\
    &=  O(dh_n^{d+1}) + O(d^2h_n^{2+d})
  \end{align*}
Since each bin is bounded, we can sum over all $J$ bins. The number of
bins is $J=h_n^{-d}$ by definition, so
\begin{align*}
  \int|\E\widehat{f}_n(x)-f(x)|dx
  &= O(h_n^{-d})\left( O(dh_n^{d+1}) +  O(d^2h_n^{2+d})\right)
   = O(dh_n) + O(d^2h_n^2).
\end{align*}
\end{proof}

The dimension of the target density
is analogous to the order of the Markov approximation. Therefore, the
convergence rates we give are asymptotic in the bandwidth $h_n$ which
shrinks as $n$ increases, but also in the dimension $d$ which
increases with $n$. Even under these asymptotics, histogram estimation
in this sense is not a high dimensional problem. The dimension of the
target density considered here is on the order of $\exp\{W(\log n)\}$,
a rate somewhere between $\log n$ and $\log\log n$.

We can combine the above lemmas to prove the main result of this
section. Essentially, we use \autoref{lem:yu} to transform the problem
from one about dependent data points to one involving independent
blocks, we then apply \autoref{lem:mcdiarmid} to the blocks to get
one-sided concentration inequalities, and finally, we use
\autoref{lem:three} to ensure that certain expectations are bounded.

\begin{proof}[Proof of \autoref{thm:one}]
  Let $g$ be the $L^1$ loss of the histogram estimator, $g=\int
  |f-\widehat{f}|$ where $\widehat{f}$ is defined in \eqref{eq:hist}. Let $\widehat{f}_{\mathbf{U}}$,
  $\widehat{f}_{\mathbf{V}}$, and $\widehat{f}_{\widetilde{\mathbf{U}}}$ be
  histograms based on the block sequences $\mathbf{U}$, $\mathbf{V}$,
  and $\widetilde{\mathbf{U}}$ respectively. Then
  \begin{align*}
    \widehat{f}(x) &= \frac{1}{nh^d}\sum_{i=1}^n I(X_i \in B(x))\\ &=
    \frac{1}{nh^d} \sum_{j=1}^{\mu}\sum_{i=2(j-1)m+1}^{(2j-1)m}
    I(X_i \in B(x)) + \frac{1}{nh^d} \sum_{j=1}^{\mu}\sum_{i=(2j-1)m+1}^{2jm} I(X_i \in B(x))\\&=
    \frac{1}{2}(\widehat{f}_{\mathbf{U}} + \widehat{f}_{\mathbf{V}}).
  \end{align*}
  Now,
  \begin{align}
    \P(g>\epsilon) &= \P\left(\int |f-\widehat{f}| >
      \epsilon\right)
    = \P\left(\int \left|\frac{f-\widehat{f}_{\mathbf{U}}}{2} +
          \frac{f-\widehat{f}_{\mathbf{V}}}{2} \right| >
      \epsilon\right)\notag\\
    &\leq \P\left( \frac{1}{2} \int |f-\widehat{f}_{\mathbf{U}}| +
      \frac{1}{2} \int|f-\widehat{f}_{\mathbf{V}}| >
      \epsilon\right)
    = \P(g_{\mathbf{U}} + g_{\mathbf{V}} > 2\epsilon) \notag\\
    &\leq \P(g_{\mathbf{U}} >\epsilon) + \P(g_{\mathbf{V}}>\epsilon)
    = 2\P(g_{\mathbf{U}} - \E[g_{\mathbf{U}}] > \epsilon-\E[g_{\mathbf{U}}]) \notag\\
    &= 2\P(g_{\mathbf{U}} - \E[g_{\widetilde{\mathbf{U}}}] >
    \epsilon-\E[g_{\widetilde{\mathbf{U}}}])
    = 2\P(g_{\mathbf{U}} - \E[g_{\widetilde{\mathbf{U}}}] > \epsilon_1)\notag,
  \end{align}
  where the equality in the last line (using
  $\E[g_{\mathbf{U}}]=\E[g_{\widetilde{\mathbf{U}}}]$) is implicit in
  the construction of $\widetilde{\mathbf{U}}$ from
   \eqref{eq:2} and $\epsilon_1 = \epsilon-\E[g_{\widetilde{\mathbf{U}}}]$. Here,
  \[
  \E[g_{\widetilde{\mathbf{U}}}] \leq \widetilde{\E} \int
  |\widehat{f}_{\widetilde{\mathbf{U}}}-\widetilde{\E}\widehat{f}_{\widetilde{\mathbf{U}}}|dx +
  \int |\tilde{\E}\widehat{f}_{\widetilde{\mathbf{U}}} - f|dx,
  \]
  so by \autoref{lem:three}, as long as for $\mu\rightarrow\infty$, $h_n
  \downarrow 0$ and $\mu h_n^d \rightarrow \infty$, then for all
  $\epsilon$ there exists $n_0(\epsilon)$ such that for all $n>n_0(\epsilon)$,
  $\epsilon>\E[g]=\E[g_{\widetilde{\mathbf{U}}}]$. Now
  applying \autoref{lem:yu} to
  the event $\{g_{\mathbf{U}} - \E[g_{\widetilde{\mathbf{U}}}] > \epsilon_1\}$
  gives
  \begin{align*}
    2\P(g_{\mathbf{U}} - \E[g_{\widetilde{\mathbf{U}}}] > \epsilon_1) &\leq
    2\P(g_{\widetilde{\mathbf{U}}} - \E[g_{\widetilde{\mathbf{U}}}] >
    \epsilon_1)+ 2\mu\beta(m)
  \end{align*}
  where the probability on the right is for the $\sigma$-field
  generated by the independent block sequence $\widetilde{\mathbf{U}}$. Since
  these blocks are independent, showing that $g_{\widetilde{\mathbf{U}}}$
  satisfies the bounded differences requirement allows for the
  application of \autoref{lem:mcdiarmid} to the blocks. For any two block
  sequences $z_1,\ldots,z_{\mu}$ and ${z}'_1,\ldots,{z}'_{\mu}$ with
  $z_\ell={z}'_\ell$ for all $\ell\neq j$, then
  \begin{align*}
    \lefteqn{\left| g_{\tilde{\mathbf{U}}}(z_1,\ldots,z_{\mu}) -
      g_{\tilde{\mathbf{U}}}({z}'_1,\ldots,{z}'_{\mu}) \right|}\\ & =
    \left|\int |\widehat{f}(y;z_1,\ldots,z_{\mu})-f(y)|dy%\right. \\
    %&- \left.
      -\int|\widehat{f}(y;z'_1,\ldots,z'_{\mu}) - f(y)|dy\right|\\
    &\leq \int |\widehat{f}(y;z_1,\ldots,z_{\mu}) -
    \widehat{f}(y;z'_1,\ldots,z'_{\mu})|dy\\
    &= \frac{2}{\mu h_n^d} h_n^d = \frac{2}{\mu}.
  \end{align*}
  Therefore,
  \begin{align*}
     \P(g>\epsilon) &\leq 2\P(g_{\tilde{\mathbf{U}}} - \E[g_{\tilde{\mathbf{U}}}] >
     \epsilon_1) + 2\mu\beta(m) \\
     & \leq 2\exp\left\{ -\frac{\mu\epsilon_1^2}{2}\right\} +
     2\mu\beta(m).
  \end{align*}
\end{proof}

\section{Proofs of results in \autoref{sec:assumpt-main-results}}
\label{sec:proof}

With
the structure from the previous section, we can state a concentration
inequality for $\widehat{\beta}^d(a)$.
\begin{lemma}
  \label{lem:main}
  Consider a sample $\mathbf{X}_{1:n}$ from a stationary
  $\beta$-mixing process. Let $\mu$ and $m$
  be positive integers such that $2\mu m\leq n$ and $\mu\geq d>0$.  Then
\begin{align*}
\P(|\widehat{\beta}^d(a) - \beta^d(a)|>\epsilon)
    & \leq
    2\exp\left\{-\frac{\mu \epsilon_1^2}{2}\right\}
    + 2\exp\left\{-\frac{\mu \epsilon_2^2}{2}\right\}
    + 4\mu\beta(m),
\end{align*}
  where $\epsilon_1 = \epsilon/2-\E\left[\int|\widehat{f}^d - f^d|\right]$ and
  $\epsilon_2 = \epsilon - \E\left[\int|\widehat{f}_a^{2d} - f_a^{2d}|\right]$.
\end{lemma}

The proof of \autoref{lem:main} relies on the triangle inequality and the
relationship between total variation distance and the $L^1$ distance
between densities.

\begin{proof}[Proof of \autoref{lem:main}]
  For any two probability measures $\nu$ and $\lambda$ defined on the same
  probability space with associated densities $f_\nu$ and $f_\lambda$
  with respect to some dominating measure $\pi$,
  \[
  \TV{\nu-\lambda} = \frac{1}{2}\int d(\pi)|f_\nu - f_\lambda|.
  \]
  Recall that  $\P_{[d]}$ is the $d$-dimensional stationary distribution of the
  $d^{th}$-order Markov approximation
  in the notation
  of \eqref{eq:2a}, and $\P_{[d],a}$ is the
  joint distribution of the bivariate random process created by the
  initial process and itself separated by $a$ time steps. By the
  triangle inequality, we can upper bound $\beta^d(a)$ for any $d=d_n$. Let
  $\widehat{\P}_{[d]}$ and $\widehat{\P}_{[d],a}$ be the distributions
  associated with histogram estimators $\widehat{f}^d$ and
  $\widehat{f}_a^{2d}$ respectively. Then,
  \begin{align*}
    \beta^d(a) &= \TV{\P_{[d]} \otimes \P_{[d]} -
                 \P_{[d],a }}\\
    &=\TV{ \P_{[d]} \otimes \P_{[d]} -\widehat{\P}_{[d]} \otimes \widehat{\P}_{[d]} +
        \widehat{\P}_{[d]}\otimes \widehat{\P}_{[d]} -
        \widehat{\P}_{[d],a} + \widehat{\P}_{[d],a}
        - \P_{[d],a } }\\
    &\leq \TV{\P_{[d]}\otimes \P_{[d]} -\widehat{\P}_{[d]} \otimes \widehat{\P}_{[d]}} +
    \TV{\widehat{\P}_{[d]}\otimes \widehat{\P}_{[d]} -
      \widehat{\P}_{[d],a}}+ \TV{ \widehat{\P}_{[d],a} - \P_{[d],a
      }}\\ 
    &\leq 2\TV{\P_{[d]} - \widehat{\P}_{[d]}} +
    \TV{\widehat{\P}_{[d]}\otimes \widehat{\P}_{[d]} - \widehat{\P}_{[d],a}}
    + \TV{ \widehat{\P}_{[d],a} - \P_{[d],a }}\\
    &= \int |f^d - \widehat{f}^d| +\frac{1}{2}\int |\widehat{f}^d\otimes
    \widehat{f}^d - \widehat{f}_a^{2d}|
    + \frac{1}{2}\int|f_a^{2d} -\widehat{f}_a^{2d}|
  \end{align*}
  where $\frac{1}{2}\int |\widehat{f}^d\otimes
    \widehat{f}^d - \widehat{f}_a^{2d}|$ is our estimator $\widehat{\beta}^d(a)$ and
  the remaining terms are the $L^1$ distance between a
  density estimator and the target density. Thus,
  \[
  \beta^d(a) - \widehat{\beta}^d(a) \leq \int |f^d - \widehat{f}^d| +
  \frac{1}{2}\int|f_a^{2d} -\widehat{f}_a^{2d}| .
  \]
  A similar argument starting from $\widehat{\beta}^d(a)= \TV{\widehat{\P}_{[d]}\otimes \widehat{\P}_{[d]} - \widehat{\P}_{[d],a}}$
  shows that
  \[
  \widehat{\beta}^d(a) - \beta^d(a) \leq \int |f^d - \widehat{f}^d| +
  \frac{1}{2}\int|f_a^{2d} -\widehat{f}_a^{2d}|,
  \]
  so we have that
  \[
  \left|\beta^d(a) - \widehat{\beta}^d(a)\right| \leq \int |f^d - \widehat{f}^d| +
  \frac{1}{2}\int|f_a^{2d} -\widehat{f}_a^{2d}| .
  \]
  Therefore,
  \begin{align*}
\P\left(\left|\beta^d(a) - \widehat{\beta}^d(a)\right| >
      \epsilon\right)
    & \leq \P\left( \int |f^d - \widehat{f}^d| +
      \frac{1}{2}\int|f_a^{2d} -\widehat{f}_a^{2d}|>\epsilon\right)\\
    & \leq \P\left( \int |f^d - \widehat{f}^d|>\frac{\epsilon}{2}\right) + \P\left(
      \frac{1}{2}\int|f_a^{2d} -\widehat{f}_a^{2d}|>\frac{\epsilon}{2}\right)\\
    &\leq 2\exp\left\{-\frac{\mu\epsilon_1^2}{2}\right\} +
    2\exp\left\{-\frac{\mu\epsilon_2^2}{2}\right\}\\
    &\quad+ 4\mu\beta(m),
  \end{align*}
  where $\epsilon_1 = \epsilon/2-\E\left[\int |\widehat{f}^d - f^d|\right]$
  and $\epsilon_2 = \epsilon-\E\left[\int |\widehat{f}_a^{2d} - f_a^{2d}|\right]$.
\end{proof}

\begin{proof}[Proof of \autoref{thm:markov-rate}]
  By \autoref{lem:main}, we have
  \begin{align*}
    \E[|\widehat{\beta}(a)-\beta(a)|] &= \int_0^1 d\epsilon\ 
    \P(|\widehat{\beta}^d(a) - \beta^d(a)|>\epsilon)\\
    &\leq \int_0^1 d\epsilon \left[2\exp\left\{-\frac{\mu \epsilon_1^2}{2}\right\}
    + 2\exp\left\{-\frac{\mu \epsilon_2^2}{2}\right\}
    + 4\mu\beta(m)\right]\\
    &= O(\mu^{-1/2}) + 4\mu\beta(m).
  \end{align*}
  To balance both terms, one needs $\beta(m)=O(\mu^{-3/2})$. Since
  $\beta(m)=O(\rho^{-m})$ for Markov processes, then taking
  $m=\frac{3}{2}\log_\rho \mu$ is sufficient. Now, solving
  \begin{equation*}
    n=2\mu \frac{3}{2}\frac{\log \mu}{\log \rho}
  \end{equation*}
  gives $\mu= O(n/W(n))$ giving the result.
\end{proof}

The proof of \autoref{thm:two} requires two steps which are given
in the following Lemmas. The first specifies the histogram
bandwidth $h_n$ and the rate at which
$d_n$ (the dimensionality of the target density) goes to infinity. If
the dimensionality of the target density were fixed, we
could achieve rates of convergence similar to those for histograms
based on IID inputs as shown in \autoref{thm:markov-rate}. However, we wish to allow the dimensionality
to grow with $n$, so the rates are much slower as shown in the
following lemma.
\begin{lemma}
  \label{lem:optimalrates}
  For the histogram estimator in \eqref{eq:estimator}, let
    $d_n \sim \exp\{W(\log n)\}$ and
    $h_n \sim n^{-k_n}$ with
  \begin{align*}
    k_n &= \frac{ W(\log n) + \frac{1}{2} \log n}{\log n \left( \frac{1}{2}
        \exp\{ W(\log n)\} + 1\right)}.
  \end{align*}
  Then, for all $\epsilon>0$,
  $\lim_{n\rightarrow\infty}\P(|\widehat{\beta}^{d_n}(a)-\beta^{d_n}(a)|>\epsilon)=0$.
\end{lemma}
\begin{proof}[Proof of \autoref{lem:optimalrates}]
  Let $h_n=n^{-k_n}$ for some $k_n$ to be determined. Then from
  \autoref{lem:three} we
  want $n^{-1/2}h_n^{-d_n/2} = n^{(k_nd_n-1)/2}\rightarrow 0$, $d_nh_n =
  d_nn^{-k}\rightarrow 0$, and $d_n^2h_n^2 = d_n^2n^{-2k}\rightarrow 0$ all as
  $n\rightarrow\infty$. Call these $A$, $B$, and $C$. Taking $A$ and $B$
  first gives
  \begin{align}
    n^{(k_nd_n-1)/2} &\sim d_nn^{-k_n}\nonumber\\
    \Rightarrow \frac{1}{2} (k_nd_n-1) \log n &\sim \log d_n-k_n\log n \nonumber\\
    \Rightarrow k_n\log n \left(\frac{1}{2}d_n+1\right) &\sim \log d_n +
    \frac{1}{2} \log n\nonumber\\
    \label{eq:3}
    \Rightarrow k_n &\sim \frac{\log d_n + \frac{1}{2}\log n}{\log n
      \left(\frac{1}{2} d_n + 1\right)}.
  \end{align}
  Similarly, combining $A$ and $C$ gives
  \begin{equation}
    \label{eq:4}
    k_n \sim \frac{2\log d_n + \frac{1}{2}\log n}{\log n
      \left(\frac{1}{2} d_n + 2\right)}.
  \end{equation}
  Equating~(\ref{eq:3}) and~(\ref{eq:4}) and solving for $d_n$ gives
  % see report 112310 for the algebra
  \[
  \Rightarrow d_n \sim \exp\left\{ W(\log n)\right\}
  \]
  where $W(\cdot)$ is the Lambert $W$ function. Plugging back
  into~(\ref{eq:3}) gives that
  \[
  h_n = n^{-k_n}
  \]
  where
 \[
 k_n = \frac{
   W(\log n) + \frac{1}{2}\log n} { \log n \left( \frac{1}{2}
     \exp\left\{ W(\log n)\right\} + 1\right)}.
 \]
\end{proof}
It is also necessary to show that as $d$ grows, we have the
nonstochastic convergence $\beta^d(a)\rightarrow
\beta(a)$. We now prove this result.
\begin{lemma}
  \label{lem:two2}
  $\beta^d(a)$ converges to $\beta(a)$ as $d\rightarrow\infty$.
\end{lemma}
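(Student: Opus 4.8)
The plan is to realize both $\beta^{d}(a)$ and $\beta(a)$ as the supremum of one fixed finite signed measure taken over a nested family of $\sigma$-fields, deduce convergence to a limit $L\le\beta(a)$ by monotonicity, and then close the gap with a routine measure-theoretic approximation.

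First I would put $\lambda := \P_{t,a} - \P_{-\infty}^{t}\otimes\P_{t+a}^{\infty}$, a finite signed measure on $(\Omega,\mathcal{G}_{\infty})$ with $\mathcal{G}_{\infty} := \sigma_{-\infty}^{t}\vee\sigma_{t+a}^{\infty}$, and $\mathcal{G}_{d}:=\sigma_{t-d+1}^{t}\vee\sigma_{t+a}^{t+a+d-1}$. Using the identity between total variation and $\tfrac12\int|f_{\mu}-f_{\nu}|$ recalled at the start of the proof of Theorem~\ref{thm:main}, together with $\lambda(\Omega)=0$, one has $\TV{\mu-\nu}=\sup_{A}(\mu(A)-\nu(A))$ for probability measures $\mu,\nu$, and this supremum is attained on the Hahn positive set of $\mu-\nu$; in particular $\beta(a)=\sup_{A\in\mathcal{G}_{\infty}}\lambda(A)$. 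The one bookkeeping point to check is that the restriction of $\lambda$ to $\mathcal{G}_{d}$ equals $\P_{t,a,d}-\P_{t-d+1}^{t}\otimes\P_{t+a}^{t+a+d-1}$: restricting a joint law to the $\sigma$-field generated by a sub-block is the same as passing to that marginal, and restricting a product measure to a product of sub-$\sigma$-fields gives the product of the restricted marginals. Hence $\beta^{d}(a)=\sup_{A\in\mathcal{G}_{d}}\lambda(A)$.

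Because $\mathcal{G}_{d}\uparrow\mathcal{G}_{\infty}$, enlarging the collection of admissible sets can only increase the supremum, so $\beta^{d}(a)$ is nondecreasing in $d$ and $\le\beta(a)$; therefore $\beta^{d}(a)\uparrow L$ for some $L\le\beta(a)$. For the reverse inequality, let $P\in\mathcal{G}_{\infty}$ be a Hahn positive set for $\lambda$, so $\lambda(P)=\beta(a)$, and observe that $\mathcal{A}:=\bigcup_{d}\mathcal{G}_{d}$ is an algebra (an increasing union of $\sigma$-fields) generating $\mathcal{G}_{\infty}$, while the total variation measure $|\lambda|$ is finite. The standard approximation theorem for a finite measure on a $\sigma$-field generated by an algebra then gives, for any $\epsilon>0$, some $d$ and some $A\in\mathcal{G}_{d}$ with $|\lambda|(A\triangle P)<\epsilon$; since $|\lambda(A)-\lambda(P)|\le|\lambda|(A\triangle P)$ this yields $\beta^{d}(a)\ge\lambda(A)>\beta(a)-\epsilon$. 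Letting $\epsilon\downarrow0$ gives $L\ge\beta(a)$, hence $L=\beta(a)$ and $\beta^{d}(a)\to\beta(a)$.

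I expect the only genuine obstacle to be this last step --- verifying that the supremum over the increasing $\sigma$-fields $\mathcal{G}_{d}$ does attain, in the limit, the supremum over $\mathcal{G}_{\infty}$ rather than stalling strictly below it --- and I would handle it exactly as above via the Hahn decomposition and the algebra-approximation lemma, taking a little care that the argument needs no absolute-continuity hypothesis relating $\P_{t,a}$ to the product measure. If one is willing to assume $\P_{t,a}\ll\P_{-\infty}^{t}\otimes\P_{t+a}^{\infty}$, an alternative is to note that $\tfrac12\,\E_{\nu}|L_{d}-1|=\beta^{d}(a)$ for the Doob martingale $L_{d}=\E_{\nu}[\,d\P_{t,a}/d\nu\mid\mathcal{G}_{d}]$ with $\nu=\P_{-\infty}^{t}\otimes\P_{t+a}^{\infty}$, and then invoke $L^{1}$ martingale convergence; but the set-approximation route avoids that restriction.
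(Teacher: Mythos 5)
Your proposal is correct and follows essentially the same route as the paper's proof: realize $\beta^d(a)$ and $\beta(a)$ as suprema of one signed measure over nested $\sigma$-fields, get $\beta^d(a)\uparrow L\le\beta(a)$ by monotonicity, and close the gap via a Hahn positive set approximated in $|\lambda|$-measure by a set from the generating algebra $\bigcup_d \mathcal{G}_d$. Your bookkeeping via the single inequality $|\lambda(A)-\lambda(P)|\le|\lambda|(A\triangle P)$ is a slightly cleaner packaging of the paper's separate bounds on $Q^+(A)$ and $Q^-(A)$, but the argument is the same.
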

\begin{proof}[Proof of \autoref{lem:two2}]
  % Let $\P_{-\infty}^0$ be the distribution on
  % $\sigma_{-\infty}^0=\sigma(\ldots,X_{-1},X_0)$,
  % and let $\P_a^\infty$ be the distribution on
  % $\sigma_{a}^\infty=\sigma(X_a,X_{a+1},X_{a+2},\ldots)$. Let $\P_a$ be the
  % distribution on
  % $\sigma=\sigma_{-\infty}^0\otimes\sigma_{a}^\infty$ (the product
  % sigma-field).  
  We
  can rewrite \autoref{defn:beta-mix} as
  \begin{align*}
    \beta(a)=\sup_{C\in\sigma_{[\infty],a}}|\P_{[\infty],a}(C)-[\P_{-\infty:0}\otimes
    \P_{a:\infty}](C)|. 
  \end{align*}
  and
  $\beta^d(a)$ as
  \begin{equation}
    \beta^d(a)=\sup_{C\in\sigma_{[d],a}}|\P_{[\infty],a}(C)-[\P_{-\infty:0}\otimes
    \P_{a:\infty}](C)|
    \label{eq:11}
  \end{equation}
  As such $\beta^d(a)\leq\beta(a)$ for all $a$ and $d$. We can
  rewrite~(\ref{eq:11}) in terms of finite-dimensional marginals:
  \begin{equation*}
    \beta^d(a)=\sup_{C\in\sigma_{[d],a}}|\P_{[d],a} (C)-[\P_{-d+1:0}\otimes
    \P_{a:(a+d-1)}](C)|.
  \end{equation*}
  Because of
  the nested nature of these sigma-fields, we have
  \begin{equation*}
    \beta^{d_1}(a) \leq \beta^{d_2}(a) \leq \beta(a)
  \end{equation*}
  for all finite $d_1 \leq d_2$. Therefore, for fixed $a$,
  $\{\beta^d(a)\}_{d=1}^\infty$ is a monotone increasing sequence
  which is bounded above, and it converges
  to some limit $L\leq \beta(a)$. To show that $L=\beta(a)$ requires
  some additional steps.

   Let $R=\P_{[\infty],a}-[\P_{-\infty:0}\otimes
    \P_{a:\infty}]$, which is a signed
   measure on $\sigma$. Let $$R^d  = \P_{[d],a}-[\P_{-d+1:0}\otimes
    \P_{a:(a+d-1)}],$$ which is a signed measure on
  $\sigma_{[d],a}$. Decompose $R$ into positive and negative parts as
  $R=Q^+-Q^-$ and similarly for $R^d=Q^{+d}-Q^{-d}$. Notice that since
  $R^d$ is constructed using the marginals of $\P$, then $R(E) =
  R^d(E)$ for all $E\in\sigma_{[d],a}$. Now since $R$ is
  the difference of probability measures, we must have that
  \begin{align}
    0 &=R(\Omega)=Q^+(\Omega)-Q^-(\Omega)\nonumber\\
    &=Q^+(D)+Q^+(D^c) -Q^-(D)-Q^-(D^c) \label{eq:19}
  \end{align}
  for all $D\in \sigma$.

  Define $Q=Q^++Q^-$.  Let $\epsilon>0$. Let $C\in\sigma$ be such that
  \begin{equation}
    Q(C) =\beta(a)=Q^+(C)=Q^-(C^c).
    \label{eq:17}
  \end{equation}
  Such a set $C$ is guaranteed by the Hahn
  decomposition theorem (letting $C^*$ be a set which attains the
  supremum in~(\ref{eq:11}), we can throw away any subsets with negative
  $R$ measure) and~(\ref{eq:19}) assuming without loss of
  generality that $\P_{[\infty],a}
  (C)>[\P_{-\infty:0}\otimes \P_{a:\infty}](C)$. We can use the field 
  $\sigma_f=\bigcup_d \sigma_{[d],a}$ to
  approximate $\sigma$ in the sense that, for all $\epsilon$, we can
  find $A\in\sigma_f$ such that $Q(A\Delta C) < \epsilon/2$ (see
  Theorem D in~\citet[\S 13]{Halmos1974} or Lemma
  A.24 in~\citet{Schervish1995}). Now,
  \begin{align*}
    Q(A\Delta C) &= Q(A\cap C^c)+Q(C\cap A^c)\\
    &= Q^-(A \cap C^c) +Q^+(C\cap A^c)
  \end{align*}
  by~(\ref{eq:17}) since $A\cap C^c \subseteq C^c$ and $C\cap A^c
  \subseteq C$. Therefore, since $Q(A\Delta C) < \epsilon/2$, we have
  \begin{align}
    \label{eq:20}
    Q^-(A\cap C^c) &\leq \epsilon/2\\
    Q^+(A^c \cap C) & \leq \epsilon/2.\nonumber
  \end{align}
  Also,
  \begin{align*}
    Q(C) &= Q(A\cap C) + Q(A^c\cap C)\\
    &= Q^+(A\cap C) + Q^+(A^c\cap C)\\
    &\leq Q^+(A) +\epsilon/2
  \end{align*}
  since $A\cap C$ and $A^c\cap C$ are contained in $C$ and $A\cap C
  \subseteq A$. Therefore
  \[
  Q^+(A) \geq Q(C) - \epsilon/2.
  \]
  Similarly,
  \[
  Q^-(A) = Q^-(A\cap C) + Q^-(A\cap C^c) \leq 0 + \epsilon/2=\epsilon/2
  \]
  since $A\cap C \subseteq C$ and $Q^-(C)=0$ by~(\ref{eq:20}). Finally,
  \begin{align*}
    Q^{+d}(A) &\geq Q^{+d}(A) - Q^{-d}(A) = R^d(A)\\
    &= R(A) = Q^+(A) - Q^-(A)\\
    &\geq Q(C) - \epsilon/2-\epsilon/2 = Q(C)-\epsilon\\
    &= \beta(a) - \epsilon.
  \end{align*}
  And since $\beta^d(a) \geq Q^{+d}(A)$, we have that for all
  $\epsilon>0$ there exists $d$ such that for all $d_1>d$,
  \begin{align*}
    \beta^{d_1}(a) &\geq \beta^d(a) \geq Q^{+d}(A) \geq \beta(a)-\epsilon.
  \end{align*}
  Thus, we must have that $L=\beta(a)$, so that
  $\beta^d(a)\rightarrow\beta(a)$ as desired.
\end{proof}

\begin{proof}[Proof of \autoref{thm:two}]
By the triangle inequality,
  \[
  |\widehat{\beta}^{d_n}(a) - \beta(a)| \leq
  |\widehat{\beta}^{d_n}(a)-\beta^{d_n}(a)|   + |\beta^{d_n}(a) - \beta(a)|.
  \]
  The first term on the right
  is bounded by the result in \autoref{lem:main}, where we have
  shown that $d_n=O(\exp\{W(\log n)\})$ is slow enough for the histogram
  estimator to remain consistent.
%Lemma~\ref{lem:two1} gives
%  asymptotic rates of convergence for the bandwidth of the histogram
%  and the rate at which $d_n$ can grow.
That $\beta^{d_n}(a)
  \xrightarrow{d_n\rightarrow\infty} \beta(a)$ follows from
\autoref{lem:two2}.
\end{proof}

\section{Performance in examples}
\label{sec:examples}

To demonstrate the performance of our estimator, we examine
three simulated examples and an example using real data.

\subsection{Simulations}
\label{sec:simulations}

The first simulation is a
simple two-state Markov chain. Thus, its mixing rate is known, only
two bins are required in the histogram, and we can use $d=1$. The
second takes this Markov
chain as an unobserved input and outputs a non-Markovian binary
sequence which remains $\beta$-mixing, but we must now allow $d$ to
grow with $n$. Finally, we examine an
autoregressive model wherein we can again use $d=1$ as it is
Markovian, but there is an
uncountable state space.

\subsubsection{Markov process}
\label{sec:markov-process}

As shown in~\citep{Davydov1973}, homogeneous recurrent Markov chains
are geometrically $\beta$-mixing, i.e.~$\beta(a) = O(\rho^a)$ for some
$0\leq\rho<1$. In particular, if the Markov chain has stationary
distribution $\pi$ and $a$-step transition distribution $P^a$, then
\begin{equation}
  \label{eq:111}
  \beta(a) = \int \pi(dx) \TV{P^a(\cdot\ |\ x) - \pi(\cdot)}.
\end{equation}

Consider first the two-state Markov chain $S_t$ pictured in \autoref{fig:markov}.
\begin{figure}[t!]
  \centering
  \includegraphics{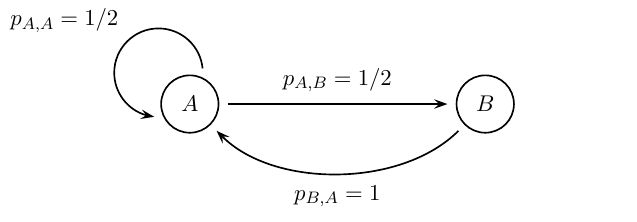}
  \caption{Two-state Markov chain $S_t$ used for simulations.}
  \label{fig:markov}
\end{figure}
By direct calculation using (\ref{eq:111}), the mixing coefficients for
this process are $\beta(a) =
\frac{4}{9}\left(\frac{1}{2}\right)^a$. We simulated chains of length
$n=1000$ from this Markov chain. \autoref{fig:latent} shows the
performance of the estimator based on 1000 replications. Here, we
have used two bins in all 
cases (as there are only two states), but we allow the Markov approximation to vary as
$d\in\{1,\ 2,\ 3,\ 4\}$, even though $d=1$ is exact. The estimator performs
well for $a\leq 5$, but begins to exhibit a positive bias as $a$
increases. This is because the estimator is nonnegative, whereas the
true mixing rates are quickly approaching zero. The upward bias is
exaggerated for larger $d$. This bias goes away as
$n\rightarrow\infty$. This is demonstrated in \autoref{fig:latentbig}
which uses $n=100,000$.
\begin{figure}[t!]
  \centering
  \includegraphics[width=.9\textwidth]{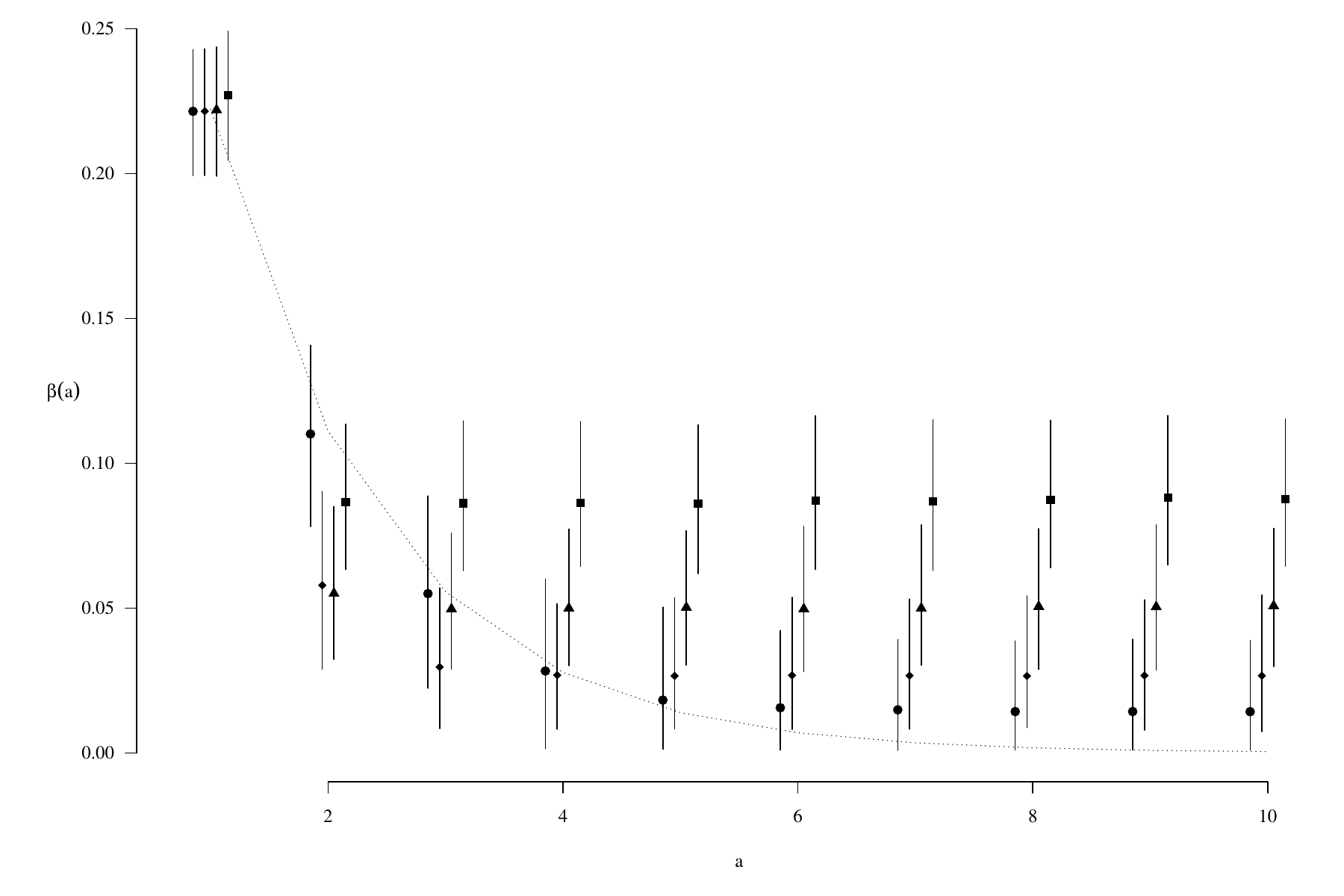}
  \caption{This figure illustrates the performance of our
    estimator for the two-state Markov chain depicted in
    \autoref{fig:markov}. We simulated length
    $n=1000$ chains and calculated $\widehat{\beta}^d(a)$ for $d=1$
    (circles), $d=2$ (diamonds), $d=3$ (triangles), and $d=4$ (squares). The dashed line
    indicates the true mixing coefficients. We show means and 95\%
    confidence intervals based on 1000 replications.}
  \label{fig:latent}
\end{figure}
\begin{figure}[t!]
  \centering
  \includegraphics[width=.9\textwidth]{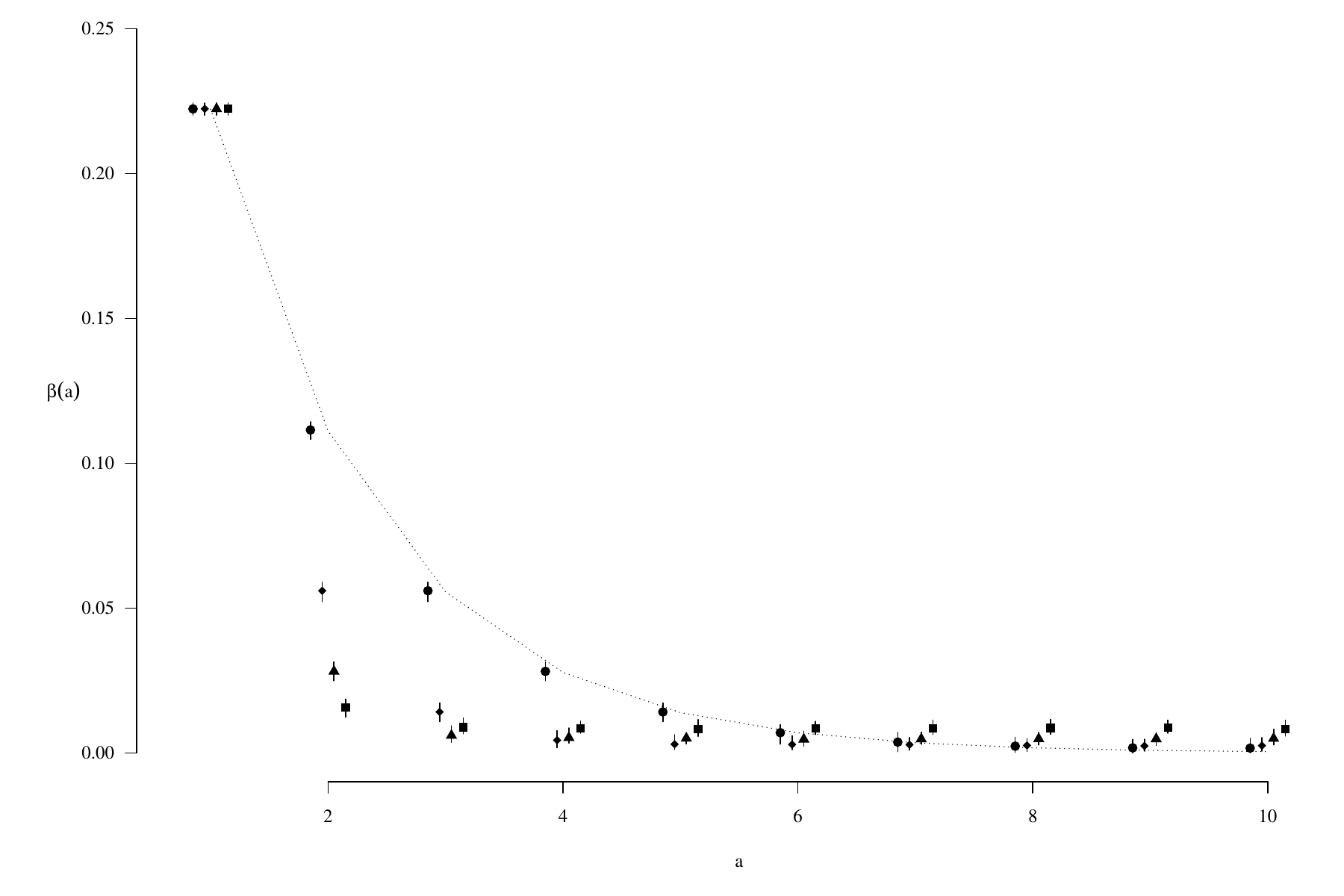}
  \caption{This again shows the two-state Markov chain but with length
    $n=100,000$ chains. Again, it shows $\widehat{\beta}^d(a)$ for $d=1$
    (circles), $d=2$ (diamonds), $d=3$ (triangles), and $d=4$ (squares). The dashed line
    indicates the true mixing coefficients. We show means and 95\%
    confidence intervals based on 100 replications.}
  \label{fig:latentbig}
\end{figure}

\subsubsection{Markov chain of order $m$}
Before examining a long-memory process, we simulate an intermediate
case. We construct a Markov model of order $m$ on $\{0,1\}$ using the
following transition probability:
\begin{align}
  \label{eq:10}
  P(Z_t = 1 | Z_{t-m},\ldots,Z_{t-1}) &= \frac{m-1}{m}
  (1-\xi_m) + \frac{1}{m}\xi_m &\mbox{with} &&\xi_m&=\frac{1}{m}\sum_{i=1}^m Z_{t-i}.
\end{align}
Essentially, this process avoids long strings of ones or zeros. In
this case, we have that $\beta(a) = \beta^m(a) = \beta^{m+k}(a)$ for
all $k \in \mathbb{N}$. Therefore, we should be able to estimate
$\beta(a)$ well by taking $d=m$. However, for smaller values of $d$,
we will tend to underestimate $\beta(a)$. In fact, it is possible,
using equation~\eqref{eq:2a}, to calculate $\beta^d(a)$ for each
$d=1,\ldots,m$. 
We simulated chains of length
$n=50000$ from this Markov chain with $m=4$. \autoref{fig:mMarkov} shows the
performance of the estimator based on 100 replications. Here, we allow
the Markov approximation to vary as 
$d\in\{1,\ 2,\ 3,\ 4,\ 5\}$, even though $d=m=4$ is exact. As above, the estimator performs
well for $a\leq 5$. Note that, for $d<m$, we can estimate $\beta^d(a)$
well as we would expect.
\begin{figure}[t!]
  \centering
  \includegraphics[width=.9\textwidth]{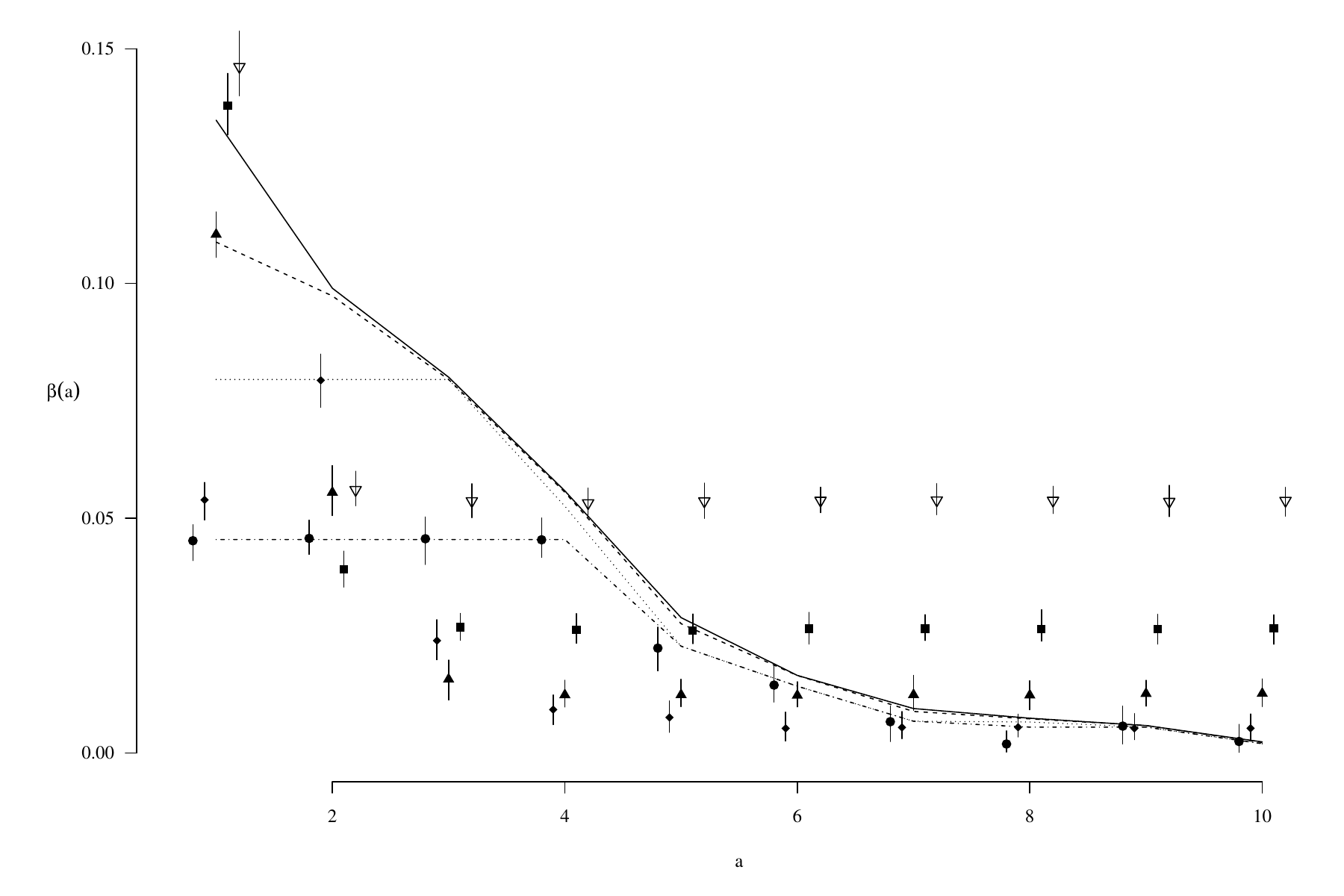}
  \caption{This figure illustrates the performance of our
    estimator for the two-state $m$-Markov chain generated with the
    transition probability in equation~\eqref{eq:10}. We simulated length
    $n=50000$ chains and calculated $\widehat{\beta}^d(a)$ for $d=1$
    (circles), $d=2$ (diamonds), $d=3$ (solid triangles), $d=4$
    (squares), and $d=5$ (open triangles). The solid line
    indicates $\beta(a)$. Other, lower-dimensional mixing coefficients
    are given by $\beta^1(a)$ (dot-dash), $\beta^2(a)$ (dotted), and
    $\beta^3(a)$ (dashed). We show means and 95\%
    confidence intervals based on 100 replications.}
  \label{fig:mMarkov}
\end{figure}

\subsubsection{Long-memory discrete process}
\label{sec:long-memory-discrete}

As an example of a long memory process, we construct,
following~\citet{Weiss1973}, a partially observable Markov process which we
call the ``even process''. Let $X_t$ be the observed sequence which takes as
input the Markov process $S_t$ constructed above. We observe
\begin{equation}
  \label{eq:7}
  X_t = \begin{cases} 1 & (S_t,S_{t-1}) = (A,B)\mbox{ or }(B,A)\\ 0
    & \mbox{else}. \end{cases}
\end{equation}
Since $S_t$ is Markovian, the joint process $(S_t,S_{t-1})$ is as well, so we
can calculate its mixing rate $\beta(a) =
\frac{8}{9}\left(\frac{1}{2}\right)^a$. The even process must also be
$\beta$-mixing, and at least as fast as the joint process, since it is a
measurable function of a mixing process. However, $X_t$ itself is
non-Markovian: runs of ones must have even lengths, so we need to know how many
ones have been observed to know whether the next observation can be zero or
must be a one. Thus, the true mixing coefficients are bounded above, though
unknown. Using the same procedure as above, \autoref{fig:even} shows the
estimated mixing coefficients. Again we observe a bias for $a$ large due to the
nonnegativity of the estimator.
\begin{figure}[t!]
  \centering
  \includegraphics[width=.9\textwidth]{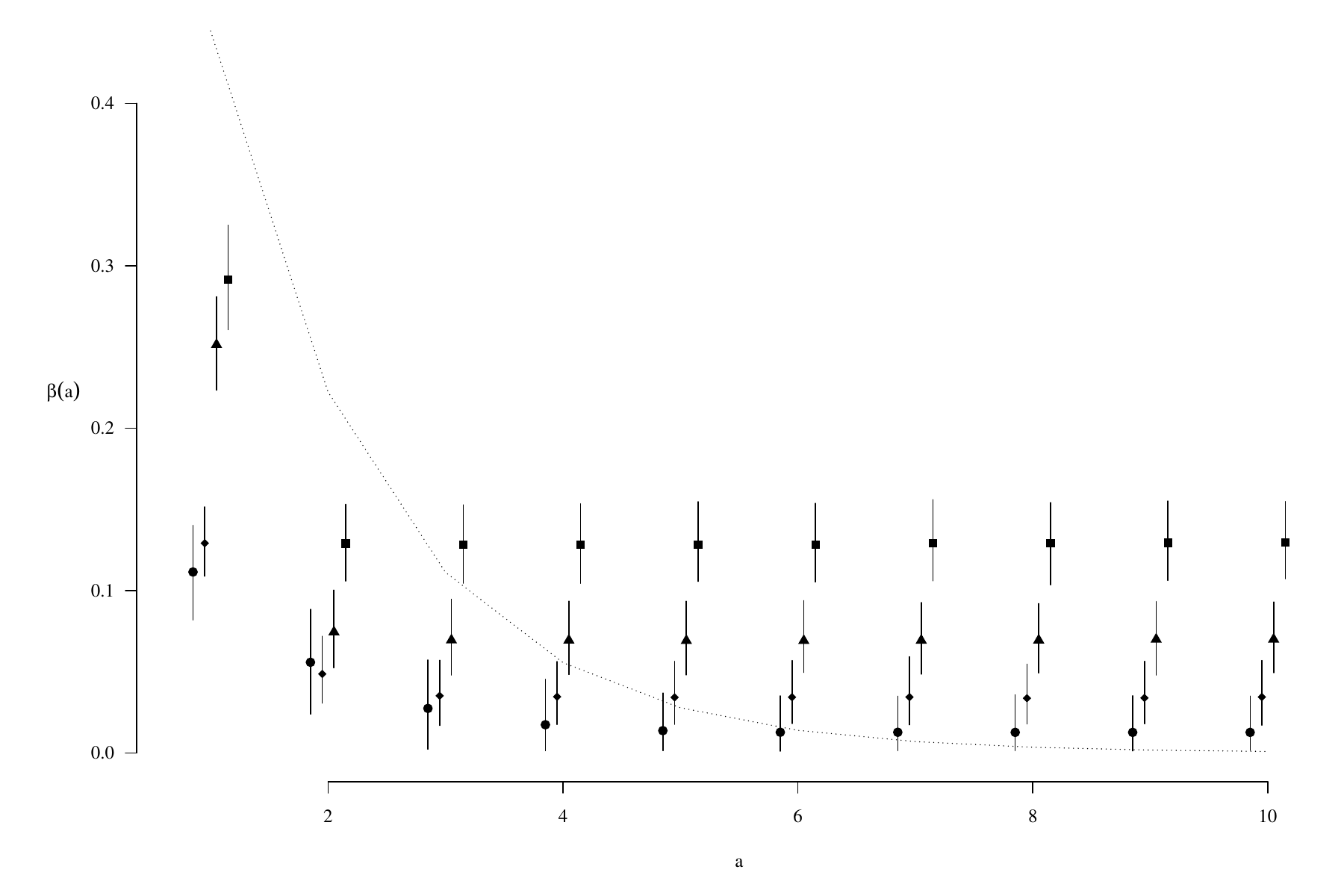}
  \caption{This figure illustrates the performance of our
    estimator for the even process in \autoref{eq:7}. Again, we simulated length
    $n=1000$ chains and calculated $\widehat{\beta}^d(a)$ for $d=1$
    (circles), $d=2$ (diamonds), $d=3$ (triangles), and $d=4$ (squares). The dashed line
    indicates an upper bound on the true mixing coefficients. We show
    means and 95\% confidence intervals based on 1000 replications.}
  \label{fig:even}
\end{figure}

\subsubsection{Autoregressive process}
\label{sec:autor-proc}

Finally, we estimate the $\beta$-mixing coefficients for an AR(1) model
\begin{align*}
  \label{eq:9}
  Z_t &= 0.5Z_{t-1} + \eta_t && \eta_t \overset{iid}{\sim}\mbox{N}(0,1).
\end{align*}
While, this process is Markovian, there is no closed form solution to
(\ref{eq:111}), so we calculate it via numerical
integration. \autoref{fig:ar} shows the performance of the
estimator for $d=1$. \autoref{fig:ar} shows the performance for
varying
$n\in\{10^2,\ 10^3,\ 10^4,\ 10^5,\ 10^6\}$. We select the bandwidth for
each $n$
using \autoref{alg:calibrate}. The selected numbers of bins are 2, 8,
17, 44, 90. As $n$ grows, the bias shrinks, even for large $a$ while
the variance of the estimators also shrinks rapidly. However, this
figure shows that even with large amounts of data, accurate estimation
is difficult.
\begin{figure}[t!]
  \centering
  \includegraphics[width=.9\textwidth]{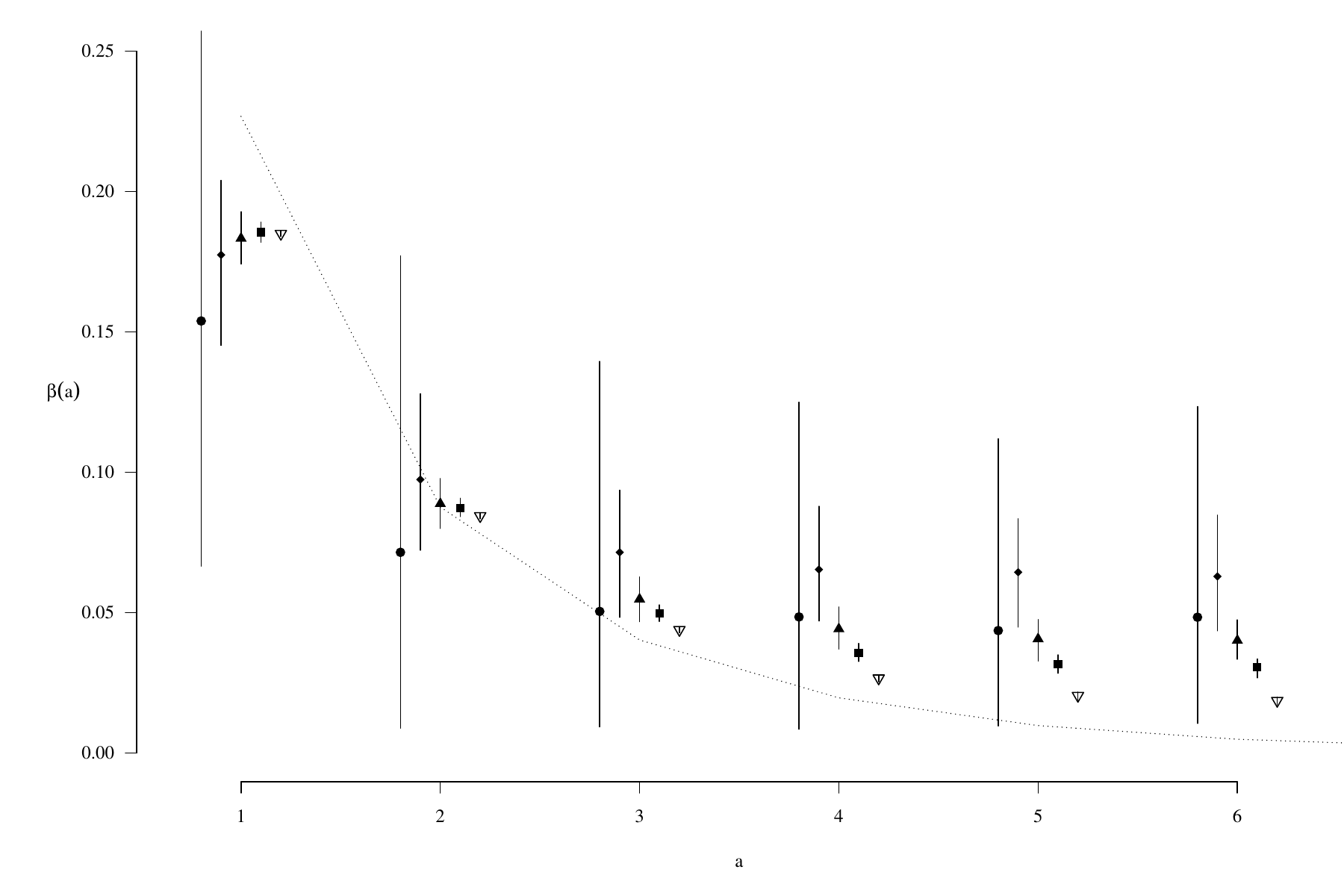}
  \caption{This figure illustrates the performance of our proposed
    estimator for the AR(1) model. We simulated chains of length
    $n\in\{10^2,\ 10^3,\ 10^4,\ 10^5,\ 10^6\}$ and calculated $\widehat{\beta}^1(a)$.
    The dashed line indicates the true mixing coefficients calculated
    via numerical integration. We show means and 95\% confidence
    intervals based on 250 replications.}
  \label{fig:ar}
\end{figure}

\subsection{Real data}
\label{sec:real-data}

To illustrate the performance of our estimator in applications, we
investigate an economic dataset in larger dimensions than in the
simulations above. We use a $q=6$-dimensional macroeconomic time series
which tracks recessions in various countries. In particular, we track
recession indicators in Canada, Germany, France, Great Britain, Japan,
and the United States. We chose this dataset for a number of reasons.
First, the data are publicly available from the \href{https://research.stlouisfed.org/fred2/}{Federal
Reserve Economic Database}
\begin{table}[t!]
  \centering
  \begin{tabular}{ll}
    \hline\hline
    Series ID & Country\\
    \hline
    CANRECDM & Canada\\
    DEURECDM & Germany\\
    FRARECDM & France\\
    GBRRECDM & Great Britain\\
    JPNRECDM & Japan\\
    USARECDM & United States\\
    \hline\hline
  \end{tabular}
  \caption{Economic recession data from \href{https://research.stlouisfed.org/fred2/}{FRED}}
  \label{tab:recessionNames}
\end{table}
using the series presented in \autoref{tab:recessionNames}.
Second, the series is long, providing daily
observations from December 1, 1961 until September of 2014 for a total
of $n=19288$ observations. This will enable us to allow $d$ to grow
quite quickly.  Third, the data are binary, so mixing
coefficients may be a more reasonable measure of temporal dependence
than, say, correlation. It also means we can ignore the issue of bin
selection.  Fourth, the data likely have high temporal
dependence as the indicators are based on a combination of monthly and
quarterly macroeconomic aggregates such as gross domestic product,
inflation, and unemployment. This means that using
both large $d$ and very large $a$ is necessary. Finally, the data are strongly
cross-sectionally dependent since these are all developed countries
likely to enter recession or expansion at similar times. This
cross-sectional dependence makes it unreasonable to examine each
series individually.

With six dimensions, the curse of dimensionality is immediately an
issue: $\widehat{f}^{2d}$ with $2$ bins along each dimension will have
$2^{12\gamma}$ bins when the Markov approximation is of length
$\gamma$ (that is $d=q\gamma=6\gamma$). In
\autoref{fig:recession}, we present estimated mixing coefficients for
$a$ between 1 and $360$ (giving estimates for 1-month, 2-month, up to
1-year lag dependence) and $\gamma \in\{1,\ 2,\ 5,\ 10,\ 20\}$. As the
figure illustrates, these data are highly temporally dependent. The
coefficients for $\gamma=1$ decrease smoothly in the lag $a$ while the
estimates for larger $\gamma$ behave less well. However, only the case of
$\gamma=20$ seems to exhibit the strong upward bias we might expect if $\gamma$
is large relative to $n$. Note that in this case we are estimating the
differences in probabilities in $2^{240}$ bins, although many of these
will be empty under both distributions.
\begin{figure}[t!]
  \centering
  \includegraphics[width=.9\textwidth]{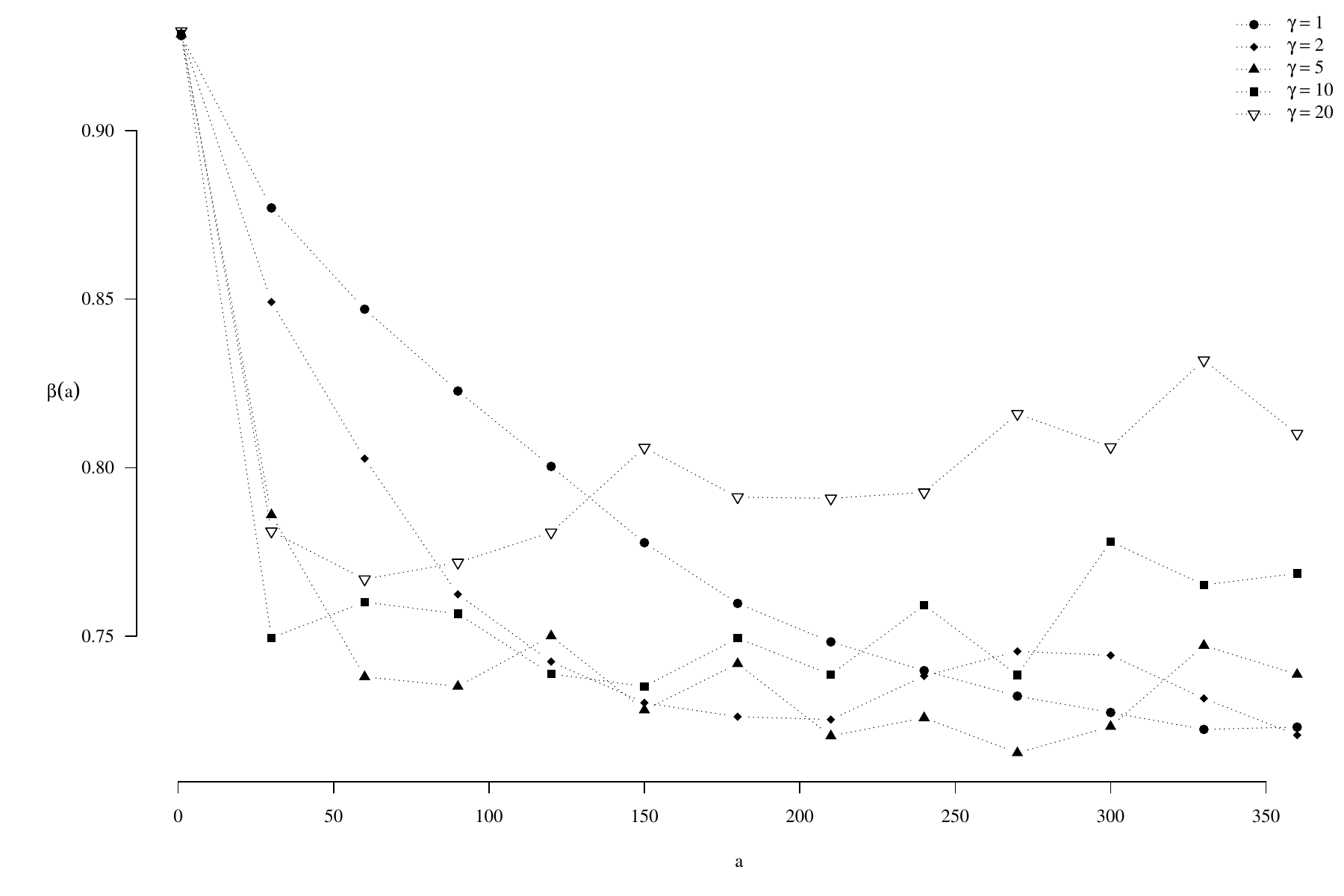}
  \caption{Estimated mixing coefficients for the recession
    indicators.}
  \label{fig:recession}
\end{figure}

\section{Discussion}
\label{sec:discussion}

We have shown that our estimator of the $\beta$-mixing coefficients is
consistent for the true coefficients $\beta(a)$ under some conditions on the
data-generating process. There are numerous results in the statistics
literature which assume knowledge of the
$\beta$-mixing coefficients, yet as far as we know, this is the first estimator for
them. An ability to estimate these coefficients will allow researchers
to apply existing results to dependent data without the need to arbitrarily
assume their values. Additionally, it will allow probabilists to
recover unknown mixing coefficients for stochastic processes via
simulation. Despite the obvious utility of this estimator, as
a consequence of its novelty, it comes with
a number of potential extensions which warrant careful exploration as well as
some drawbacks.

Several other mixing and weak-dependence coefficients also have a
total-variation flavor, perhaps most notably $\alpha$-mixing
\cite{Doukhan1994,DedeckerDoukhan2007,Bradley2005}.  None of them have
estimators, yet, and the same trick might well work for them, too.

The reader will note that \autoref{thm:two} does not provide a convergence
rate.  The rate in \autoref{thm:markov-rate} applies only to Markov processes
or the difference between $\widehat{\beta}^d(a)$ and $\beta^d(a)$.  In order to
provide a rate in \autoref{thm:two}, we would need a better understanding of
the non-stochastic convergence of $\beta^d(a)$ to $\beta(a)$. It is not
immediately clear that this quantity can converge at any well-defined rate. In
particular, it seems plausible, but is not proven, that the rate of convergence
depends on the tail of the sequence $\{\beta(a)\}_{a=1}^\infty$.

The use of histograms rather than kernel density estimators for the joint and
marginal densities is surprising and perhaps not ultimately necessary. As
mentioned above, \citet{Tran1989} proved that KDEs are consistent for
estimating the stationary density of a time series with $\beta$-mixing inputs,
so perhaps one could replace the histograms in our estimator with
KDEs. However, this would need an analogue of the double asymptotic results
proven for histograms in \autoref{lem:three}. In particular, we need to
estimate increasingly higher dimensional densities as
$n\rightarrow\infty$. This does not cause a problem of small-$n$-large-$d$
since $d$ is chosen as a function of $n$, however it will lead to increasingly
higher dimensional integration. For histograms, the integral is always
computationally trivial, but in the case of KDEs, the numerical accuracy of the
integration algorithm becomes increasingly hard to assure. This issue could
swamp any statistical efficiency gains obtained through the use of kernels,
though further investigation is warranted.

The main drawback of an estimator based on a density estimate is its
complexity. The mixing coefficients are functionals of the joint and marginal
distributions derived from the stochastic process $\mathbf{X}$, however, it is
unsatisfying to estimate densities and calculate integrals in order to estimate a
single number. Vapnik's main principle for solving problems using a restricted
amount of information is ``When solving a given problem, try to avoid solving a
more general problem as an intermediate step~\citep[p.~30]{Vapnik2000}.''
However, despite our estimator's complexity, we are able to obtain nearly
parametric rates of convergence to the Markov approximation departing only by
logarithmic factors. While the simplicity principle is clearly violated,
perhaps our seed will precipitate a more aesthetically pleasing solution.

\section*{Acknowledgements}
\label{sec:acknowledgements}

The authors are grateful to Darren Homrighausen for providing useful
insights. We also thank two anonymous reviewers for helpful comments on an
earlier version of this paper. Finally, we would like to thank the Institute
for New Economic Thinking for financial support.  In addition, DJM
acknowledges support from the NSF (DMS 1407439) and CRS
acknowledges support from grants of the NIH (R01 NS047493) and the NSF
(DMS 1207759, 1418124).

\bibliography{betamixFinal.bib}

\end{document}